\setlist[enumerate,1]{label=\textup{(\arabic*)}}
\tikzset{cd/.style=matrix of math nodes,row sep=2em,column sep=2em, text height=1.5ex, text depth=0.5ex}
\tikzset{cdar/.style=->,auto}
\tikzset{overar/.style={draw=white,double=black,double distance=.4pt,very thick}}
\renewcommand{\PrintDOI}[1]{\href{http://dx.doi.org/\detokenize{#1}}{doi: \detokenize{#1}}}
\numberwithin{equation}{section}
\theoremstyle{plain}
\newtheorem{theorem}[equation]{Theorem}
\newtheorem{lemma}[equation]{Lemma}
\newtheorem{proposition}[equation]{Proposition}
\newtheorem{corollary}[equation]{Corollary}
\theoremstyle{definition}
\newtheorem{definition}[equation]{Definition}
\theoremstyle{remark}
\newtheorem{example}[equation]{Example}
\newcommand{\tenscorep}{\mathbin{\begin{tikzpicture}[baseline,x=.75ex,y=.75ex] \draw[line width=.2pt] (-0.8,1.15)--(0.8,1.15);\draw[line width=.2pt](0,-0.25)--(0,1.15); \draw[line width=.2pt] (0,0.75) circle [radius = 1];\end{tikzpicture}}}
\newcommand*{\Braiding}[2]{\begin{tikzpicture}[baseline]
    \draw[-] (0,0) -- (1.4ex,1.4ex) node[right,inner sep=0pt] {$\scriptstyle #2$};
    \draw[-,draw=white,line width=2.4pt] (0,1.4ex) -- (1.4ex,0);
    \draw[-] (1.4ex,0) -- (0,1.4ex) node[left,inner sep=0pt] {$\scriptstyle #1$};
  \end{tikzpicture}}
\newcommand*{\Dualbraiding}[2]{\begin{tikzpicture}[baseline]
    \draw[-] (1.4ex,0) -- (0,1.4ex) node[left,inner sep=0pt] {$\scriptstyle #1$};
    \draw[-,draw=white,line width=2.4pt] (0,0) -- (1.4ex,1.4ex);
    \draw[-] (0,0) -- (1.4ex,1.4ex) node[right,inner sep=0pt] {$\scriptstyle #2$};
  \end{tikzpicture}}
\newcommand*{\corep}[1]{\textup{#1}}          
\newcommand*{\Corep}[1]{\mathbb{#1}}          
\newcommand*{\Ducorep}[1]{\hat{\corep{#1}}}   
\newcommand*{\DuCorep}[1]{\hat{\Corep{#1}}}   
\newcommand*{\Codouble}[1]{\mathfrak{D}({#1})\sphat\text{\space}}
\newcommand*{\Bialg}[1]{(#1,\Comult[#1])}
\newcommand*{\DuBialg}[1]{(\hat{#1},\DuComult[#1])}
\newcommand*{\YDcat}{\mathcal{YD}\mathfrak{C^*alg}}
\newcommand*{\Nhatdrinf}{\hat{\mathcal{N}}}
\newcommand*{\udrinf}{\mathcal U}
\newcommand*{\Mod}[1]{\abs{#1}}
\newcommand*{\Ph}[1]{\Phi_{#1}}
\newcommand*{\nb}{\nobreakdash}
\newcommand*{\Star}{$^*$\nb-}
\newcommand*{\C}{\mathbb C}
\newcommand*{\Z}{\mathbb Z}
\newcommand*{\R}{\mathbb R}
\newcommand*{\T}{\mathbb T}
\newcommand*{\CLS}{\mathrm{CLS}}
\newcommand*{\G}[1][G]{\mathbb #1}
\newcommand*{\DuG}[1][G]{\hat{\mathbb{#1}}}
\newcommand*{\Comult}[1][]{\Delta_{#1}}
\newcommand*{\DuComult}[1][]{\hat{\Delta}_{#1}}
\newcommand*{\Qgrp}[2]{\mathbb{#1}=(#2,\Comult[#2])}
\newcommand*{\DuQgrp}[2]{\widehat{\mathbb{#1}}=(\hat{#2},\DuComult[#2])}
\newcommand*{\Bound}{\mathbb B}
\newcommand*{\Comp}{\mathbb K}
\newcommand*{\Contvin}{\textup C_0}
\newcommand*{\Cont}{\textup C}
\newcommand*{\Mor}{\textup{Mor}}
\newcommand*{\Id}{\textup{id}}
\newcommand*{\Multunit}[1][]{\mathbb W^{#1}}
\newcommand*{\multunit}[1][]{\textup W^{#1}}
\newcommand*{\DuMultunit}[1][]{\widehat{\mathbb W}^{#1}}
\newcommand*{\BrMultunit}{\mathbb F}
\newcommand*{\DuBrMultunit}{\widehat{\mathbb F}}
\newcommand*{\brmultunit}{\textup F}
\newcommand*{\ProjBichar}{\mathbb{P}}
\newcommand*{\projbichar}{\textup P}
\newcommand*{\Duprojbichar}{\hat{\projbichar}}
\newcommand*{\Flip}{\Sigma}
\newcommand*{\flip}{\sigma}
\newcommand*{\Cst}{\textup C^*}
\newcommand*{\Cred}{\textup C^*_\textup r}
\newcommand*{\Cstcat}{\mathfrak{C^*alg}}
\newcommand*{\Hils}[1][H]{\mathcal{#1}}
\newcommand*{\Mult}{\mathcal M}
\newcommand*{\U}{\mathcal U}
\newcommand*{\defeq}{\mathrel{\vcentcolon=}}
\newcommand*{\abs}[1]{\lvert#1\rvert}
\newcommand*{\conj}[1]{\overline{#1}}
\begin{document}
\title[Braided quantum groups and their bosonizations]{Braided quantum groups and their bosonizations in the $\textup{C}^*$-algebraic framework}

\author{Sutanu Roy}
\email{sutanu@niser.ac.in}
\address{School of Mathematical Sciences\\
 National Institute of Science Education and Research  Bhubaneswar\\
 Jatni, 752050\\
 India}
\address{Homi Bhabha National Institute\\ 
Training School Complex, Anushaktinagar\\ 
Mumbai, 400094\\ 
India}
 
\begin{abstract}
 We present a general theory of braided quantum groups in the \(\Cst\)\nb-algebraic framework using the language of multiplicative unitaries. Starting with a manageable multiplicative unitary in the representation category of the quantum codouble of a regular quantum group \(\G\) we construct a braided \(\Cst\)\nb-quantum group over~\(\G\) as a \(\Cst\)\nb-bialgebra in the monoidal category of the \(\G\)\nb-Yetter-Drinfeld \(\Cst\)\nb-algebras. Furthermore, we establish the one to one correspondence between braided \(\Cst\)\nb-quantum groups and \(\Cst\)\nb-quantum groups with a projection. Consequently, we generalise the bosonization construction for braided Hopf\nb-algebras of Radford and Majid to braided \(\Cst\)\nb-quantum groups. Several examples are discussed. In particular, we show that the complex quantum plane admits a braided \(\Cst\)\nb-quantum group structure over the circle group~\(\T\) and identify its bosonization with the simplified quantum \(\textup{E}(2)\) group.
\end{abstract}

\subjclass[2010]{Primary 81R50, 46L89; Secondary 18M15, 46L55}
\keywords{Braided C*-quantum group, braided multiplicative unitary, bosonization, quantum plane}
\maketitle

\section{Introduction}
 \label{sec:intro}
Semidirect product construction of groups is a fundamental method of extending certain homogeneous symmetries to some inhomogeneous symmetries of a given physical system. In the realm of noncommutative geometry, this would mean that the semidirect product of quantum groups may contain information about the inhomogeneous quantum symmetries of quantum spaces. Several investigations were done by several authors to understand the structure of the inhomogeneous quantum groups mostly at the algebraic level. We refer to~\cite{PV1997} and the references therein for more details. The primary focus of those works was on the understanding of the deformations of the Poincar\'e group. 

On the other hand, many important examples of \(\Cst\)\nb-quantum groups~\(\Qgrp{H}{C}\) were constructed by deforming semidirect product of Lie groups~\(K\rtimes G\) with Abelian~\(G\) \cites{W1991a,W2001,WZ2002}. The \(\Cst\)\nb-quantum group structure is captured by a single unitary operator \(\Multunit[C]\in\U(\Hils\otimes\Hils)\), on a suitably chosen separable Hilbert space~\(\Hils\), with two additional properties. The first one is algebraic namely, \(\Multunit[C]\) is a \emph{multiplicative unitary}: \(\Multunit[C]\) satisfies the 
 \emph{pentagon equation}
  \begin{equation}
    \label{eq:pentagon}
    \Multunit[C]_{23}\Multunit[C]_{12}
    = \Multunit[C]_{12}\Multunit[C]_{13}\Multunit[C]_{23}
    \qquad
    \text{in \(\U(\Hils\otimes\Hils\otimes\Hils)\).}
  \end{equation}  
 This ensures the set~\(C_{0}\defeq\{(\omega\otimes\Id_{\Hils})\Multunit[C]\mid \omega\in\Bound(\Hils)_{*}\}\subset\Bound(\Hils)\) is an algebra. The analytic property of \(\Multunit\), namely \emph{manageability}~\cite{W1996}*{Definition 1.1}, implies 
  \begin{equation}
 \label{eq:5Aug20}
   C=C_{0}^\CLS\text{ is a \(\Cst\)\nb-algebra and }
   \Comult[C](c)\defeq \Multunit[C](c\otimes 1)\Multunit[C]{}^{*},
   \quad
   \text{for all~\(c\in C\).}    
\end{equation}
 Here, \(\CLS\) stands for the closed linear span. 
The \(\Cst\)\nb-quantum group~\(\Qgrp{H}{C}\) is then said to be 
 \emph{generated} by~\(\Multunit[C]\) in the sense of~\cite{W1996}*{Theorem 1.5}. 

The semidirect product group \(K\rtimes G\) comes with a canonical 
endomorphism~\(p\colon K\rtimes G\to K\rtimes G\) defined by~\(p(k,g)=(1_{K},g)\), where~\(1_{K}\) denotes the identity element of~\(K\). Clearly, \(p\) is idempotent, that is~\(p^{2}=p\), with the image~\(G\subset K\rtimes G\) and the kernel~\(K\subset K\rtimes G\). This fact gets translated to the deformations of~\(K\rtimes G\) as well. At the level of multiplicative unitaries, there exist unitaries \(\BrMultunit ,\ProjBichar\in\U(\Hils\otimes\Hils)\) such that~\(\ProjBichar\) is again a manageable multiplicative unitary and~\(\Multunit=\BrMultunit\ProjBichar\). Also, \(\ProjBichar\) generates~\(G\), while viewed as quantum group \(\Qgrp{G}{\Contvin(G)}\), as a Woronowicz closed quantum subgroup of~\(\G[H]\), see~\cite{DKSS2012}*{Definition 3.2}. 
Thus \(\ProjBichar\) is the quantum analogue of the idempotent group homomorphism or \emph{projection} on~\(\G[H]\). At the \(\Cst\)\nb-algebra level, \(C\) is identified with the crossed product \(\Cst\)\nb-algebra \(B\rtimes\widehat{G}\) for some \(\Cst\)\nb-algebra \(B\) equipped with an action of the dual group~\(\widehat{G}\).

However, the range of the restriction of~\(\Comult[C]\) on~\(B\) is not a \(\Cst\)\nb-subalgebra of the multiplier algebra of~\(B\otimes B\) denoted by~\(\Mult(B\otimes B)\) and \(\BrMultunit\) is not a multiplicative unitary. 
This strongly indicates that the quantum analogue of the translation group~\(K\) is not a \(\Cst\)\nb-quantum groups.

In a purely algebraic setting, when quantum groups and Hopf algebras are synonymous, Radford had discovered~\cite{R1985} that the Hopf algebras \(C\) with a projection~\(p\) is equivalent to pairs consisting of a Hopf algebra \(A=\textup{Im}(p)\) and a braided Hopf algebra \(B\) over \(A\). We refer~\cite{M1995}*{Chapter 10} for a detailed discussion on it. This was further generalised in the categorical framework and extensively studied by Majid~\cites{M1994, M1999, M2000}. The reconstruction of the Hopf algebra \(C\) and the projection~\(p\) starting from \(A\) and~\(B\) is named by Majid as \emph{bosonization}.

Motivated by the algebraic theory we ask the following question: does there exist a one to one correspondence between \emph{braided \(\Cst\)\nb-quantum groups} and quantum groups with projection? A systematic investigation in this direction was initiated by the author in his thesis~\cite{R2013}. It was further studied in~\cites{MRW2017,MR2019} at the level of manageable multiplicative unitaries, and in~\cite{KS2015} at the level of von Neumann algebras. Meanwhile the \emph{braided compact quantum groups} over a compact quantum group~\(\G\) was introduced in~\cite{MRW2016}. The \(\Cst\)\nb-algebra version of the associated bosonization turns out to be a compact quantum group. Consequently, \(q\)\nb-deformations of \(\textup{SU}(2)\) group, braided analogue of the free orthogonal groups \(\textup{O}(n)\) (in dimension~\(n\)) for nonzero~\(q\in\C\) with~\(\textup{Phase}(q)\neq 1\) were constructed as braided compact quantum over~\(\T\), see~\cites{KMRW2016, MR2019a}. In fact, the resulting bosonizations of braided~\(\textup{SU}_{q}(2)\) groups are~\(\textup{U}_{q}(2)\) groups. Furthermore, the braided compact quantum groups constructed in the recent works~\cites{R2021,BJR2022} captures quantum symmetries of matrix algebras and graph~\(\Cst\)\nb-algebras.

\medskip

The goal of this article is twofold. First, we provide an avenue to pass from manageable braided multiplicative unitaries to braided \(\Cst\)\nb-quantum groups in Theorem~\ref{the:braid_Qgrp}. Consequently, we construct the duals of braided \(\Cst\)\nb-quantum groups as braided \(\Cst\)\nb-quantum groups and generalise Pontrjagin duality for braided \(\Cst\)\nb-quantum groups. Secondly, we establish the one to one correspondence between braided \(\Cst\)\nb-quantum groups and \(\Cst\)\nb-quantum groups with projection in Theorem~\ref{the:brdq_qgpp}. In particular, this allows to construct new examples of \(\Cst\)\nb-quantum groups using braided~\(\Cst\)\nb-quantum groups over~\(\G\) as building blocks. The resulting theory turns out to be very general and it covers the following:
\begin{enumerate}
 \item quantum \(\textup{E}(2)\) groups associated to nonzero real deformation parameters~\cite{W1991a}, quantum~$az+b$ groups~\cites{W2001, S2005} and quantum $ax+b$ groups~\cites{WZ2002} are bosonizations of some braided \(\Cst\)\nb-quantum groups;
\item braided compact quantum groups over a compact quantum group~\cite{MRW2016} are braided \(\Cst\)\nb-quantum groups and so are their examples~\cites{KMRW2016, MR2019a,R2021,BJR2022};
 \item \(q\)\nb-deformations of \(\textup{E}(2)\) group are braided \(\Cst\)\nb-quantum groups over~\(\T\) for~\(q\in\{z\in\C\mid 0<|z|<1\}\setminus\R\), and their bosonizations provide new examples of \(\Cst\)\nb-quantum groups~\cite{RR2020};
 \item the \emph{complex quantum planes} associated to the real deformation parameters~\(0<q<1\) are braided \(\Cst\)\nb-quantum groups over~\(\T\) with Woronowicz's simplified \(\textup{E}_{q}(2)\) groups as their bosonization, see Section~\ref{sec:qnt_E_2}.
\end{enumerate}

Let us briefly describe the techniques we have employed to develop this theory and give an outline of the article. We begin by fixing notations, recalling the necessary definitions, and results in Section~\ref{sec:prelim}.

Suppose~\(\mathcal{C}\) is the category of unitary representations of the quantum codouble of a \(\Cst\)\nb-quantum group~\(\G\) on separable Hilbert spaces. Then \(\mathcal{C}\) is a braided monoidal category and the braiding operators are unitaries, see \cite{MRW2016}*{Proposition 3.4 \& Section 5}. In short, we call~\(\mathcal{C}\) as unitarily braided monoidal category. A unitary morphism \(\BrMultunit\colon \Hils[L]\otimes\Hils[L]\to 
 \Hils[L]\otimes\Hils[L]\) in~\(\mathcal{C}\) is a 
 \emph{braided multiplicative unitary over \(\G\)} if it  
 satisfies a variant of the pentagon equation~\eqref{eq:pentagon} in~\(\mathcal{C}\):
  \begin{equation}
      \label{eq:top-braided_pentagon}
      \BrMultunit_{23} \BrMultunit_{12}
      = \BrMultunit_{12} (\Braiding{\Hils[L]}{\Hils[L]})_{23}
      \BrMultunit_{12} (\Dualbraiding{\Hils[L]}{\Hils[L]})_{23}
      \BrMultunit_{23}
      \quad\text{in }\U(\Hils[L]\otimes\Hils[L]\otimes\Hils[L]),
  \end{equation}  
 where~\(\Braiding{\Hils[L]}{\Hils[L]}\in\U(\Hils[L]\otimes\Hils[L])\) is the unitary braiding and \(\Dualbraiding{\Hils[L]}{\Hils[L]}\defeq (\Braiding{\Hils[L]}{\Hils[L]})^{*}\). Next we 
 assume~\(\BrMultunit\) is manageable~\cite{MRW2017}*{Definition 3.5} and define
 \begin{equation}
  \label{eq:Aug6}
   B_{0}=\{(\omega\otimes\Id_{\Hils[L]}) \BrMultunit\mid \omega\in\Bound(\Hils[L])_{*}\}, 
   \quad 
  B=B_{0}^{\CLS},
  \quad 
  \Comult[B](b)\defeq \BrMultunit (b\otimes 1)\BrMultunit^*. 
 \end{equation}
Unlike unbraided situation, it is unclear whether \(B_{0}\) is an algebra in the first place. In order to prove that~\(B\) is \(\Cst\)\nb-algebra, we consider the \(\Cst\)\nb-quantum group~\(\Qgrp{H}{C}\), generated by the manageable multiplicative unitary~\(\Multunit[C]\), with a projection~\(\ProjBichar\) associated to~\(\BrMultunit\) given by~\cite{MRW2017}*{Theorem 3.7}. Here we use the manageability of~\(\BrMultunit\) implicitly. Next we ensure that~\(\ProjBichar\) generates~\(\G\) and it is a Woronowicz closed quantum subgroup of~\(\G[H]\) in Proposition~\ref{prop:G-prod_proj}. The ``kernel''~\(\ProjBichar\) corresponds to the quantum homogeneous space~\(\G \backslash\G[H]\) with respect to the (left) quantum group homomorphism~\(\Delta_{L}\colon C\to\Mult(A\otimes C)\) that corresponds to~\(\ProjBichar\), see~\cite{MRW2012}*{Theorem 5.5}. At this point, we assume~\(\G\) is a regular quantum group~\cite{BS1993}. Then the existence and uniqueness, up to \(\G\)\nb-equivariant isomorphism, of the underlying \(\Cst\)\nb-algebra of the quantum homogeneous space~\(\G\backslash\G[H]\) inside~\(\Mult(C)\) follows from the Landstad-Vaes theory~\cites{V2005, RW2018}. In fact, Proposition~\ref{prop:Landstad_slices} is an important step where we show that the underlying \(\Cst\)\nb-algebra of~\(\G\backslash\G[H]\subset\Mult(C)\) is unitarily equivalent to \(B\); hence~\(B\) is a \(\Cst\)\nb-algebra.

Consequently, we prove the first main result of this article Theorem~\ref{the:braid_Qgrp} namely, the construction of the \emph{braided \(\Cst\)\nb-quantum group}~\(\Bialg{B}\) over~\(\G\) from~\(\BrMultunit\). More precisely, 
\(B\subset\Bound(\Hils[L])\) is a \(\G\)\nb-Yetter\nb-Drinfeld \(\Cst\)\nb-algebra and 
\(\Comult[B]\colon B\to\Mult(B\boxtimes B)\) is a nondegenerate \Star{}homomorphism satisfying braided analogue of coassociativity and cancellation conditions. Here~\(\boxtimes\) denotes the monoidal product of the category of \(\G\)\nb-Yetter\nb-Drinfeld \(\Cst\)\nb-algebras.

Next, we discuss the bosonization construction for the braided \(\Cst\)\nb-quantum group~\(\Bialg{B}\) over~\(\Qgrp{G}{A}\) by reconstructing~\(\Qgrp{H}{C}\) and the projection~\(\ProjBichar\) in Proposition~\ref{prop:big_qntgrp}. In particular, \(\G\)\nb-Yetter\nb-Drinfeld structure on~\(B\) says that there is an action~\(\hat{\beta}\) of~\(\DuG\) on~\(B\). We identify \(C\) with crossed product \(\Cst\)\nb-algebra~\(B\rtimes_{\hat{\beta}} \DuG\) and express \(\Comult[C]\) in terms of \(\Comult[A]\) and~\(\Comult[B]\). Then we establish the desired one to one correspondence between braided \(\Cst\)\nb-quantum groups and quantum groups with a projection up to isomorphism in Theorem~\ref{the:brdq_qgpp}.

Finally, in Section~\ref{sec:Examples} we show that our theory applies to a large class of examples of \(\Cst\)\nb-quantum groups. In particular, we apply our main results to the concretely constructed example of a manageable braided multiplicative unitary in~\cite{MRW2017}*{Section 4} over~\(\T\). We obtain complex quantum plane as the resulting braided \(\Cst\)\nb-quantum group over~\(\T\) and the simplified quantum \(\textup{E}(2)\) group coincides with the associated bosonization.

\subsection{Acknowledgement} 
This work was partially supported by INSPIRE faculty award Grant~No. DST/INSPIRE/04/2016/000215 given by the D.S.T., Government of India. The author would like to express his sincere gratitude to Professor Ralf Meyer and Professor Stanis\l{}aw Lech Woronowicz for their inspiration. The author is also grateful to Professor Woronowicz for sharing Proposition~\ref{Prop:tens_aff}. The author also thanks the anonymous referees for their constructive inputs. 

\section{Preliminaries}
\label{sec:prelim}
All Hilbert spaces and \(\Cst\)\nb-algebras (which are not explicitly multiplier algebras) are assumed to be separable. For a \(\Cst\)\nb-algebra~\(A\), let \(\Mult(A)\) be its multiplier algebra and let \(\U(A)\) be the group of unitary multipliers of~\(A\) and denote the identity element of~\(\U(A)\) by~\(1_{A}\). 
 For two norm closed subsets \(X\) and~\(Y\) of a 
\(\Cst\)\nb-algebra~\(A\) and~\(T\in\Mult(A)\), we set 
\[
  XY\defeq\{xy\mid x\in X, y\in Y\}^\CLS ,
  \quad
  XTY\defeq\{xTy\mid x\in X, y\in Y\}^\CLS ,
\]
where CLS stands for the \emph{closed linear span}.

Let~\(\Cstcat\) be the category of \(\Cst\)\nb-algebras with
nondegenerate \Star{}homomorphisms \(\varphi\colon A\to\Mult(B)\) as
morphisms \(A\to B\); let \(\Mor(A,B)\) denote the set of morphisms.

Let~\(\Hils\) be a Hilbert space.  A \emph{representation} of a
\(\Cst\)\nb-algebra~\(A\) is a nondegenerate
\Star{}homomorphism \(\pi\colon A\to\Bound(\Hils)\).  Since
\(\Bound(\Hils)=\Mult(\Comp(\Hils))\) and the nondegeneracy
conditions \(\pi(A) \Comp(\Hils)=\Comp(\Hils)\) and
\(\pi(A) \Hils=\Hils\) are equivalent; hence \(\pi\in\Mor(A,\Comp(\Hils))\). 
The unit element of~\(\Mult(\Comp(\Hils))\) is denoted by~\(1_{\Hils}\).

We write~\(\Flip\) for the tensor flip \(\Hils\otimes\Hils[K]\to
\Hils[K]\otimes\Hils\), \(x\otimes y\mapsto y\otimes x\), where
\(\Hils\) and~\(\Hils[K]\) are Hilbert spaces. We write~\(\flip\) for the
tensor flip isomorphism \(A\otimes B\to B\otimes A\) for two
\(\Cst\)\nb-algebras \(A\) and~\(B\). Further we use the standard `leg numbering' 
notation for maps acting on tensor products.

Let~\(\Hils\) be a Hilbert space and let \(D\) be a nondegenerate \(\Cst\)\nb-subalgebra of~\(\Bound(\Hils)\). A closed and densely defined operator~\(T\) acting on~\(\Hils\) is said to be \emph{affiliated with} \(D\) if \(z_{T}\defeq T(I+T^{*}T)^{-\frac{1}{2}}\in\Mult(D)\) and~\((1-z_{T}^{*}z_{T})D\) is dense in \(D\) (see~\cite{W1995}). It is denoted by~\(T\eta D\).

\subsection{C*-quantum groups, their actions and representations}
\label{sec:multunit_quantum_groups} 
A \emph{\(\Cst\)\nb-quantum group}~\(\G\) is a 
pair~\(\Bialg{A}\) consisting of a \(\Cst\)\nb-algebra 
\(A\) and an element~\(\Comult[A]\in\Mor(A,A\otimes A)\) 
\emph{generated} by a manageable multiplicative unitary \(\Multunit\) 
in the way described in the following theorem.
\begin{theorem}[\cites{SW2007,
    W1996}]
  \label{the:Cst_quantum_grp_and_mult_unit}
  Let~\(\Hils\) be a Hilbert space and let
  \(\Multunit\in\U(\Hils\otimes\Hils)\) be a manageable
  multiplicative unitary. Then 
   \begin{enumerate}
   \item the sets of left and right slices of~\(\Multunit\), defined by
  \begin{equation}
    \label{eq:first_leg_slice}
    A  \defeq \{(\omega\otimes\Id_{\Hils})\Multunit\mid
    \omega\in\Bound(\Hils)_*\}^\CLS,
    \quad
    \hat{A} \defeq \{(\Id_{\Hils}\otimes\omega)\Multunit\mid
    \omega\in\Bound(\Hils)_*\}^\CLS,
  \end{equation}
  are nondegenerate \(\Cst\)\nb-subalgebras of~\(\Bound(\Hils)\);
  \item \(\Multunit\in\U(\hat{A}\otimes
    A)\subseteq\U(\Hils\otimes\Hils)\).  We write~\(\multunit\)
    for~\(\Multunit\) viewed as a unitary multiplier of
    \(\hat{A}\otimes A\) and call it \emph{reduced bicharacter}.
 \item \(\Comult[A]\) is uniquely characterised by 
    \begin{equation}
      \label{eq:Comult_W}
      (\Id_{\hat{A}}\otimes \Comult[A])\multunit
      = \multunit_{12}\multunit_{13}
      \qquad \text{in \(\U(\hat{A}\otimes A\otimes A)\).}
    \end{equation}  
    Moreover, 
    \(\Comult[A]\) is \emph{coassociative}\textup{:}
    \begin{equation}
      \label{eq:coassociative}
      (\Comult[A]\otimes\Id_A)\circ\Comult[A]
      = (\Id_A\otimes\Comult[A])\circ\Comult[A],
    \end{equation}
    and satisfies the \emph{cancellation conditions}\textup{:}
    \begin{equation}
      \label{eq:cancel}
      \Comult[A](A) (1_A\otimes A)
      = A\otimes A
      = (A\otimes 1_A) \Comult[A](A).
    \end{equation}
  \end{enumerate}
\end{theorem}
We shall not use the full power of the Haar weight 
approach towards \(\Cst\)\nb-quantum groups developed by Kustermans and 
Vaes in~\cite{KV2000}. The \emph{dual} multiplicative unitary of~\(\Multunit\) is \(\DuMultunit\defeq
\Flip\Multunit[*]\Flip\in\U(\Hils\otimes\Hils)\).  If~\(\Multunit\) is manageable
so is~\(\DuMultunit\).  The dual \(\Cst\)\nb-quantum group
\(\DuQgrp{G}{A}\) generated by~\(\DuMultunit\). Its comultiplication map~\(\DuComult[A]\in\Mor(\hat{A},\hat{A}\otimes\hat{A})\) is uniquely 
determined by the following equation
\begin{equation}
  \label{eq:dual_Comult_W}
  (\DuComult[A]\otimes \Id_A)\multunit
  = \multunit_{23}\multunit_{13}
  \qquad\text{in \(\U(\hat{A}\otimes \hat{A}\otimes A)\).}
\end{equation}
A~\(\Cst\)\nb-quantum group~\(\G\) is \emph{regular} if 
\begin{equation}
  \label{eq:regularity_1}
  (1_{\hat{A}}\otimes A)\multunit(\hat{A}\otimes 1_A)
  = \hat{A}\otimes A,
\end{equation}
see~\cite{BS1993}*{Proposition 3.2 (b) \& Proposition 3.6} and~\cite{SW2007}*{Lemma 40}. 

\begin{example}
 \label{eq:Grp-Qgrp}
 Suppose~\(G\) is a locally compact group. Let~\(\Hils\) be the Hilbert space of square integrable functions with respect to the right Haar measure of~\(G\). We denote the right regular representation of~\(G\) on~\(\Hils\) by~\(\mu\). Define 
 \((\Multunit\xi)(g_{1},g_{2})\defeq\xi(g_{1}g_{2},g_{2})\) for all~\(\xi\in\Hils\otimes\Hils\) and~\(g_{1}, g_{2}\in G\). Then~\(\Multunit\) is a manageable multiplicative unitary and generates the \(\Cst\)\nb-quantum group~\(\Qgrp{G}{\Contvin(G)}\), where 
 \((\Comult[\Contvin(G)]f)(g_{1},g_{2})\defeq f(g_{1}g_{2})\) for all~\(f\in\Contvin(G)\). Also~\(\Bialg{\Cred(G)}\), where~\(\Comult[\Cred(G)](\mu_{g})\defeq \mu_{g}\otimes \mu_{g}\) for all~\(g\in G\), is the dual of~\(\DuG\). In fact, \(\G\) and~\(\DuG\) are examples of regular \(\Cst\)\nb-quantum groups.
\end{example}
 
\begin{definition}
  \label{def:cont_action}
  A \emph{right action} of~\(\G\) on a
  \(\Cst\)\nb-algebra~\(C\) is an injective element~\(\gamma\in\Mor(C, C\otimes
  A)\) with the following properties:
  \begin{enumerate}
    \item \(\gamma\) is a comodule structure, that is,
    \begin{equation}
      \label{eq:right_action}
      (\Id_C\otimes\Comult[A])\circ\gamma
      = (\gamma\otimes\Id_A)\circ\gamma;
    \end{equation}
  \item \(\gamma\) satisfies the \emph{Podle\'s condition}: 
  \(\gamma(C)(1_C\otimes A)=C\otimes A\).
 \end{enumerate}
  We call \((C,\gamma)\) a \emph{\(\G\)\nb-\(\Cst\)\nb-algebra}.  We shall
  drop~\(\gamma\) from our notation whenever it is clear from the context.
\end{definition}
  Similarly, a \emph{left action} of~\(\G\) on~\(C\) is an injective element
  \(\gamma\in\Mor(C, A\otimes C)\) satisfying an appropriate variant 
  of~\eqref{eq:right_action}, that is 
  \((\Comult[A]\otimes\Id_{C})\circ\gamma=(\Id_{A}\otimes\gamma)\circ\gamma\), 
  and the Podle\'s condition: \(\gamma_{C}(C)(A\otimes 1_{C})=A\otimes C\). 
  The word ``action'' will always 
  mean right action throughout.  
  
  For any two \(\G\)\nb-\(\Cst\)\nb-algebras \((C_{1},\gamma_{1})\) and 
  \((C_{2},\gamma_{2})\) an element \(f\in\Mor(C_{1}, C_{2})\) is
  said to be \emph{\(\G\)\nb-equivariant} if \(\gamma_{2} \circ f =
  (f\otimes\Id_A)\circ\gamma_{1}\). The set of
  \(\G\)\nb-equivariant morphisms from~\(C_{1}\) to~\(C_{2}\) is denoted by 
  \(\Mor^{\G}(C_{1},C_{2})\). Let \(\Cstcat(\G)\) be the category 
  with \(\G\)\nb-\(\Cst\)-algebras as
  objects and \(\G\)\nb-equivariant morphisms as arrows.

\begin{definition}
  \label{def:corepresentation}
  A (right) \emph{representation} of~\(\G\) on a Hilbert
  space~\(\Hils[L]\) is a unitary \(\corep{U}\in\U(\Comp(\Hils[L])\otimes A)\)
  with
  \begin{equation}
    \label{eq:corep_cond}
    (\Id_{\Hils[L]}\otimes\Comult[A])\corep{U} =\corep{U}_{12}\corep{U}_{13}
    \qquad\text{in }\U(\Comp(\Hils[L])\otimes A\otimes A).
  \end{equation}
\end{definition}
The \emph{tensor product of representations}~\(\corep{U}^{i}\in\U(\Comp(\Hils[L]_{i})\otimes A)\) of~\(\G\) on~\(\Hils[L]_{i}\) for~\(i=1,2\) 
is defined by~\(\corep{U}^{1}\tenscorep \corep{U}^{2}\defeq\corep{U}^{1}_{13}\corep{U}^{2}_{23}\in\U(\Comp(\Hils[L]_1\otimes\Hils[L]_2)\otimes A)\). 
\begin{definition}
  \label{def:covariant_corep}
  A \emph{covariant representation} of a \(\G\)\nb-\(\Cst\)\nb-algebra \((C,\gamma)\) on 
  a Hilbert space~\(\Hils[L]\) is a pair~\((\corep{U},\varphi)\) consisting of a 
  representation \(\corep{U}\in\U(\Comp(\Hils[L])\otimes A)\) of~\(\G\) and an 
  element~\(\varphi\in\Mor(C,\Comp(\Hils[L]))\) that satisfy the covariance
  condition
  \begin{equation}
    \label{eq:covariant_corep}
    (\varphi\otimes\Id_A)(\gamma(c)) =
    \corep{U}(\varphi(c)\otimes 1_A)\corep{U}^*
    \qquad\text{in }\U(\Comp(\Hils[L])\otimes A)
  \end{equation}
  for all \(c\in C\).  Moreover, \((\corep{U},\varphi)\) is called
  \emph{faithful} if~\(\varphi\) is faithful. Existence of faithful covariant 
  representations is guaranteed by~\cite{MRW2014}*{Example~4.5}.
\end{definition}

\subsection{Heisenberg pairs}
 \label{subsec:Heis_pair}
  Let~\(\Qgrp{G}{A}\) be a~\(\Cst\)\nb-quantum group, let 
  \(\DuQgrp{G}{A}\) be its dual, and 
  let~\(\multunit\in\U(\hat{A}\otimes A)\) 
  be the reduced bicharacter of~\(\G\).
  
  For a pair of representations~\((\pi,\hat{\pi})\) of \(A\) and \(\hat{A}\) 
  on a Hilbert space~\(\Hils\) we 
  denote~\(\multunit_{1\pi}\defeq((\Id_{\hat{A}}\otimes\pi)\multunit)_{12}\) and 
  \(\multunit_{\hat{\pi}3}\defeq((\hat{\pi}\otimes\Id_{A})\multunit)_{23}\) 
  in~\(\U(\hat{A}\otimes\Comp(\Hils)\otimes A)\).
  
  The pair~\((\pi,\hat{\pi})\) is called 
  \begin{enumerate}
   \item a \(\G\)\nb-\emph{Heisenberg pair} if and only if~\(
   \multunit_{\hat{\pi}3}\multunit_{1\pi}
   = \multunit_{1\pi}\multunit_{13}\multunit_{\hat{\pi}3}\); 
  \item a \(\G\)\nb-\emph{anti-Heisenberg pair} 
  if and only if \(\multunit_{1\pi}\multunit_{\hat{\pi}3}
  = \multunit_{\hat{\pi}3}\multunit_{13}\multunit_{1\pi}\).
  \end{enumerate}
  In fact, \(\G\)\nb-Heisenberg pairs and \(\G\)\nb-anti\nb-Heisenberg pairs are 
  in one to one correspondence, see~\cite{MRW2014}*{Lemma 3.4}. 
  A \(\G\)\nb-Heisenberg or a \(\G\)\nb-anti-Heisenberg pair~\((\pi,\hat{\pi})\) 
  is said to be faithful if~\(\pi\) and \(\hat{\pi}\) are faithful representations. \cite{R2015a}*{Proposition 3.2} 
  shows that any~\(\G\)\nb-Heisenberg pair is faithful. 
  
 For any~\(\G\)\nb-Heisenberg or~\(\G\)\nb-anti\nb-Heisenberg pair~\((\pi,\hat{\pi})\) on~\(\Hils\), We denote by~\(\hat{A}_{*}\) the set of all linear functionals on~\(\hat{A}\) that admit extensions to normal functionals on the weak closure of~\(\hat{\pi}(\hat{A})\). It turns out that \(\hat{A}_{*}\) is independent of the choice of~\((\pi,\hat{\pi})\).
  
Consider a pair of representations~\((\corep{U},\corep{V})\) of~\(\G\) and~\(\DuG\) on the Hilbert spaces~\(\Hils[L]_{1}\) and~\(\Hils[L]_{2}\), respectively. By virtue of \cite{MRW2014}*{Theorem 4.1}, for any \(\G\)\nb-Heisenberg pair 
 \((\pi,\hat{\pi})\) on~\(\Hils\) there exists a 
 unique~\(Z\in\U(\Hils[L]_{1}\otimes\Hils[L]_{2})\) such that 
\begin{equation}
 \label{eq:braid}
\corep{U}_{1\pi}\corep{V}_{2\hat{\pi}}Z_{12}
=\corep{V}_{2\hat{\pi}}\corep{U}_{1\pi}
\quad\text{in~\(\U(\Hils[L]_{1}\otimes\Hils[L]_{2}\otimes\Hils)\),}
\end{equation}
where~\(\corep{U}_{1\pi}\defeq 
((\Id_{\Hils[L]_{1}}\otimes\pi)\corep{U})_{13}\) and 
\(\corep{V}_{2\hat{\pi}}\defeq 
((\Id_{\Hils[L]_{2}}\otimes\hat{\pi})\corep{V})_{23}\).

\subsection{Landstad-Vaes theory} 
  Let~\(\Qgrp{G}{A}\) be a~\(\Cst\)\nb-quantum group, let 
  \(\DuQgrp{G}{A}\) be its dual, and 
  let~\(\multunit\in\U(\hat{A}\otimes A)\) 
  be the reduced bicharacter of~\(\G\).
  
 A \emph{\(\G\)\nb-product} is a triple 
 \((C,\gamma, i)\) consisting of a 
 \(\Cst\)\nb-algebra \(C\), a left action 
 \(\gamma\in\Mor(C,A\otimes C)\) of 
 \(\G\) on~\(C\), and an element 
 \(i\in\Mor(A,C)\) satisfying 
  \begin{equation}
      \label{eq:landstad} 
    \gamma \circ i = (\Id_{A}\otimes i)\circ\Comult[A].   
  \end{equation}  
Define \(X\defeq (\Id_{\hat{A}}\otimes i)\multunit \in
\U(\hat{A}\otimes C)\). 
\begin{theorem}
  \label{the:landstad}
    Suppose \(\Qgrp{G}{A}\) is a regular \(\Cst\)\nb-quantum group and 
    \((\pi,\hat{\pi})\) is a \(\G\)\nb-Heisenberg on a Hilbert space~\(\Hils_{\pi}\). 
  Let \((C,\gamma,i)\) be a \(\G\)\nb-product. Define \(\varphi\colon C\to \Comp(\Hils_{\pi})\otimes C\) by 
  \(\varphi(c)\defeq X^*_{\hat{\pi}2}\gamma(c)_{\pi 2}X_{\hat{\pi}2}\) for \(c\in C\). There is a unique
  \(\Cst\)\nb-subalgebra ~\(D\) of~\(\Mult(C)\) with the
  following properties\textup{:}
  \begin{enumerate}
  \item \(D\subseteq \{c\in\Mult(C) \mid\text{ \(\gamma(c)=1_{A}\otimes c\)}\}\)\textup{;}
  \item \(C=i(A) D\)\textup{;}
  \item \(\hat{A}\otimes D = (\hat{A}\otimes 1) \varphi(D)\). 
  \end{enumerate}
  More explicitly, 
  \begin{equation}
  \label{eq:explicit-landstad-vaes}
   D=\{(\omega\otimes\Id_{C})\varphi(c)\mid \omega\in\Bound(\Hils)_{*}, c\in C\}^\CLS\subseteq\Mult(C).
  \end{equation}
  The \(\Cst\)\nb-algebra \(D\) is called the Landstad\nb-Vaes algebra for the 
  \(\G\)\nb-product \((C,\gamma,i)\).   
  In particular, the third condition gives~\(\varphi\in\Mor(D,\hat{A}\otimes D)\). 
  Moreover, \(\hat{\beta}\defeq \flip\circ\varphi\in\Mor(D, D\otimes \hat{A})\) 
  is a \textup{(}right\textup{)} action of~\(\DuG\) on~\(D\), and extends to a
  \(\G\)\nb-equivariant isomorphism between~\(C\) and~\(D\rtimes\DuG\).
\end{theorem} 
This fundamental result was first proved by Vaes \cite{V2005}*{Theorem 6.7} (with slightly different conventions) for regular quantum groups~\(\G\) with the Haar weights and in~\cite{RW2018}*{Theorem 3.6 \& 3.8}  in the general setting of 
(not necessarily regular) \(\Cst\)\nb-quantum groups. However, we are going to restrict our attention to the regular \(\Cst\)\nb-quantum groups.

\subsection{Monoidal category of Yetter-Drinfeld C*-algebras}
\begin{definition}[\cite{NV2010}*{Definition 3.1}]
A~\(\G\)\nb-\emph{Yetter}\nb-\emph{Drinfeld} \(\Cst\)\nb-algebra 
is a triple \((C,\gamma,\widehat{\gamma})\) consisting of a 
\(\Cst\)\nb-algebra~\(C\)  along with actions 
\(\gamma\in\Mor(C,C\otimes A)\) and \(\widehat{\gamma}\in\Mor(C,C\otimes\hat{A})\) 
of \(\G\) and~\(\DuG\) that satisfy the \emph{Yetter}\nb-\emph{Drinfeld} 
compatibility criterion
\begin{equation}
 \label{eq:Yetter-Drinfeld}
   (\widehat{\gamma}\otimes\Id_{A})\gamma(c) 
= (\multunit_{23})\flip_{23}\Bigl((\gamma\otimes\Id_{\hat{A}})\widehat{\gamma}(c)\Bigr)(\multunit[*]_{23})
\qquad\text{for all \(c\in C\).}
\end{equation}
Indeed, \((C,\gamma,\widehat{\gamma})\) is a 
\(\G\)\nb-Yetter-Drinfeld \(\Cst\)\nb-algebra if and only 
if~\((C,\widehat{\gamma},\gamma)\) is a 
\(\DuG\)\nb-Yetter-Drinfeld \(\Cst\)\nb-algebra. 
\end{definition}
\begin{example}
 \label{ex:yett-drinf_reg}
 Let~\(\Qgrp{G}{A}\) be a regular \(\Cst\)\nb-quantum group. 
 Then~\(\Theta\colon A\to A\otimes\hat{A}\) defined by 
\(\Theta(a)\defeq \flip(\multunit[*](1_{\hat{A}}\otimes a)\multunit)\) 
for~\(a\in A\) is an action of~\(\DuG\) on~\(A\), and 
\((A,\Comult[A],\Theta)\) is a 
\(\G\)\nb-Yetter\nb-Drinfeld \(\Cst\)\nb-algebra 
(see~\cite{NV2010}*{Section 3}).
\end{example}

Let~\(\YDcat(\G)\) be the category with \(\G\)\nb-Yetter\nb-Drinfeld 
\(\Cst\)\nb-algebras as objects and~\(\G\) and~\(\DuG\)\nb-equivariant 
morphisms as arrows. Suppose, \((C_{1},\gamma_{1},\widehat{\gamma}_{1})\) and 
\((C_{2},\gamma_{2},\widehat{\gamma}_{2})\) are 
objects of~\(\YDcat(\G)\). 
Without loss of generality, suppose 
\((\corep{U}^{i},\varphi_{i})\) are faithful 
covariant representation of \((C_{i},\gamma_{i})\) on 
\(\Hils[L]_{i}\) and \((\corep{V}^{i},\widehat{\varphi}_{i})\) are
faithful covariant representations of~\((C_{i},\widehat{\gamma}_{i})\) on~\(\Hils[L]_{i}\) for 
\(i=1,2\), respectively. 

Define~\(\Braiding{\Hils[L]_{2}}{\Hils[L]_{1}}\colon \Hils[L]_{2}\otimes 
\Hils[L]_{1}\to\Hils[L]_{1}\otimes\Hils[L]_{2}\) by 
\(\Braiding{\Hils[L]_{2}}{\Hils[L]_{1}}\defeq Z\circ\Flip\), where  
\(Z\in\U(\Hils[L]_{1}\otimes\Hils[L]_{2})\) is the unique solution of~\eqref{eq:braid} 
for the pair of representations~\((\corep{U}^{1},\corep{V}^{2})\).

\begin{theorem}[\cite{MRW2014}*{Lemma
    3.20, Theorem 4.3, Theorem 4.9}]
  \label{the:crossed_prod}
  For~\(i=1,2\) define~\(j_{i}\in\Mor(C_{i},\Comp(\Hils[L]_{1}\otimes\Hils[L]_{2}))\) 
 by
\begin{equation}
 \label{eq:twisted_tensor}
 j_{1}(c_{1})\defeq \varphi_{1}(c_{1})\otimes 1_{\Hils[L]_{2}}, 
 \qquad
 j_{2}(c_{2})\defeq \Braiding{\Hils[L]_{2}}{\Hils[L]_{1}}(\varphi_{2}(c_{2})\otimes 1_{\Hils[L]_{1}}) \Dualbraiding{\Hils[L]_{1}}{\Hils[L]_{2}},
\end{equation}  
where~\(\Dualbraiding{\Hils[L]_{1}}{\Hils[L]_{2}}\defeq \Flip \circ Z^{*}\).
  Then the subspace~\(C_{1}\boxtimes C_{2} \defeq j_{1}(C_{1})j_{2}(C_{2})\)
  is a nondegenerate \(\Cst\)\nb-subalgebra of~\(\Bound(\Hils[L]_{1}\otimes\Hils[L]_{2})\) 
  and the triple~\((C_{1}\boxtimes C_{2},j_{1}, j_{2})\), up to equivalence, does not
  depend on the faithful covariant representations
  \((\corep{U}^{i},\varphi_{i})\) and~\((\corep{V}^{i},\varphi_{i})\) for 
  \(i=1,2\). 
\end{theorem}

Furthermore~\(C_{1}\boxtimes C_{2}\) becomes a~\(\G\)\nb-Yetter\nb-Drinfeld 
\(\Cst\)\nb-algebra with respect to the diagonal actions of~\(\G\) and \(\DuG\) 
defined by
\begin{alignat}{2}
 \label{eq:diag_G}
 & C_{1}\boxtimes C_{2}\ni x\overset{\gamma_{1}\bowtie\gamma_{2}}{\longrightarrow} 
 (\corep{U}^{1}\tenscorep \corep{U}^{2}) (x\otimes 1_{A}) (\corep{U}^{1}\tenscorep \corep{U}^{2})^{*} 
 \in C_{1}\boxtimes C_{2}\otimes A,\\
 \label{eq:diag_du_G}
 & C_{1}\boxtimes C_{2}\ni x\overset{\widehat{\gamma}_{1}\bowtie\widehat{\gamma}_{2}}{\longrightarrow}
 (\corep{V}^{1}\tenscorep \corep{V}^{2}) (x\otimes 1_{\hat{A}}) 
 (\corep{V}^{1}\tenscorep \corep{V}^{2})^{*}
 \in C_{1}\boxtimes C_{2}\otimes \hat{A}.
\end{alignat}
This following theorem has been proved in~\cite{NV2010}*{Section 3} 
in the presence of Haar weights on~\(\G\) and in 
\cite{MRW2016}*{Section 5} in the general framework of modular multiplicative unitaries.
\begin{theorem}
 \label{the:Yetter-Drinfeld_cat}
 \((\YDcat(\G),\boxtimes)\) is a monoidal category.  
\end{theorem}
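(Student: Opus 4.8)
The plan is to verify the data and axioms of a monoidal category directly, taking the twisted tensor product \(\boxtimes\) of Theorem~\ref{the:crossed_prod} as the tensor bifunctor, \(\C\) with its trivial coactions as the tensor unit, and using the braiding \(Z\) as the organising tool. The action of \(\boxtimes\) on objects is already in place: the paragraph preceding the statement equips \(C_1\boxtimes C_2\) with the diagonal coactions \(\gamma_1\bowtie\gamma_2\) and \(\widehat\gamma_1\bowtie\widehat\gamma_2\) and records that the outcome is again a \(\G\)\nb-Yetter\nb-Drinfeld \(\Cst\)\nb-algebra. So the first genuine task is to extend \(\boxtimes\) to morphisms. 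Given \(\G\)\nb- and \(\DuG\)\nb-equivariant maps \(f_i\colon C_i\to D_i\), equivariance says precisely that, after passing to faithful covariant representations of the \(C_i\) and \(D_i\), the \(f_i\) intertwine the corepresentations \(\corep{U}^i\) and \(\corep{V}^i\); by the characterisation \eqref{eq:braid} of \(Z\), the braiding is then natural with respect to these intertwiners. I would define \(f_1\boxtimes f_2\) on generators by \(j_1(c_1)j_2(c_2)\mapsto j_1^D(f_1(c_1))\,j_2^D(f_2(c_2))\), with \(j_k^D\) the embeddings of \(D_1\boxtimes D_2\), and check that this is a well-defined \Star{}homomorphism. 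Preservation of identities and composites, and \(\G\)\nb- and \(\DuG\)\nb-equivariance of \(f_1\boxtimes f_2\) for the diagonal coactions, are then routine, the latter because those coactions are conjugation by \(\corep{U}^1\tenscorep\corep{U}^2\) and \(\corep{V}^1\tenscorep\corep{V}^2\).

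Next I would supply the unit constraints. Taking \(C_2=\C\) with trivial coactions, the associated corepresentations \(\corep{U}^2,\corep{V}^2\) are the trivial one\nb-dimensional ones, for which \eqref{eq:braid} forces \(Z\) to be the identity and hence \(\Braiding{\Hils[L]_2}{\Hils[L]_1}=\Flip\); the braiding collapses to the tensor flip and \(j_2\) reduces to the unital embedding. This produces canonical isomorphisms \(\C\boxtimes C\cong C\cong C\boxtimes\C\), which I take as the left and right unitors, and naturality is immediate from the functoriality established above.

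The heart of the argument is associativity. I would represent all three Yetter\nb-Drinfeld algebras on \(\Hils[L]_1\otimes\Hils[L]_2\otimes\Hils[L]_3\) and compare the iterated products \((C_1\boxtimes C_2)\boxtimes C_3\) and \(C_1\boxtimes(C_2\boxtimes C_3)\). By Theorem~\ref{the:crossed_prod} the twisted tensor product is independent of the chosen faithful covariant representations, so it suffices to exhibit one representation of each side on this common Hilbert space and check that the embeddings of \(C_1\), \(C_2\) and \(C_3\) coincide. This reduces to a compatibility (hexagon/braid) relation among \(Z_{12}\), \(Z_{13}\) and \(Z_{23}\), which holds because \(Z\) arises from the single reduced bicharacter \(\multunit[A]\) through \eqref{eq:braid} and because the braiding is a morphism in the braided monoidal category \(\mathcal{C}\) of codouble corepresentations. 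I expect the resulting associator to be the identity on the underlying space, so that associativity is strict. The main obstacle is exactly this braid computation: one must iterate \eqref{eq:braid} across three factors and invoke naturality of the braiding to confirm that both bracketings yield the same \(\Cst\)\nb-subalgebra with matching embeddings.

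Finally, once the associator and the unitors are identity maps in these concrete realisations, the pentagon and triangle coherence identities hold automatically: each side of the pentagon is the identity on \(\Hils[L]_1\otimes\Hils[L]_2\otimes\Hils[L]_3\otimes\Hils[L]_4\), and the triangle reduces to the collapse of the braiding against the trivial corepresentation already used for the unitors. Assembling the four ingredients — bifunctoriality of \(\boxtimes\), the unit \(\C\), strict associativity, and coherence — establishes that \((\YDcat(\G),\boxtimes)\) is a monoidal category.
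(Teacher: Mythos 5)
The paper itself does not prove this theorem: it delegates it to \cite{Nest-Voigt:Poincare}*{Section 3} (with Haar weights) and to \cite{Meyer-Roy-Woronowicz:Twisted_tensor_2}*{Section 5} in the general setting of modular multiplicative unitaries. Your outline is essentially the architecture of those references --- \(\boxtimes\) as the bifunctor, \(\C\) with trivial coactions as a strict unit, associativity checked spatially on \(\Hils[L]_1\otimes\Hils[L]_2\otimes\Hils[L]_3\) via a hexagon property of \(Z\) --- so you are reconstructing the cited proof rather than diverging from it.

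Two steps, however, are asserted where the real work lives. First, defining \(f_1\boxtimes f_2\) on generators by \(j_1(c_1)j_2(c_2)\mapsto j_1^D(f_1(c_1))\,j_2^D(f_2(c_2))\) and extending is not automatically well defined: \(C_1\boxtimes C_2\) is the closed linear span of such products, and one must show that every linear relation among the elements \(j_1(c_1)j_2(c_2)\) is preserved by the prescription. Moreover, a \(\G\)\nb- and \(\DuG\)\nb-equivariant morphism \(f_i\colon C_i\to D_i\) is a nondegenerate \Star{}homomorphism into \(\Mult(D_i)\); it does not by itself produce an operator intertwining \(\corep{U}^i\) with the corepresentation attached to \(D_i\), since the two algebras are represented on different Hilbert spaces. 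So ``naturality of \(Z\) with respect to these intertwiners'' needs an actual mechanism --- a spatial implementation of the \(f_i\) or the universal property of \(\boxtimes\) from \cite{Meyer-Roy-Woronowicz:Twisted_tensor} --- and as written this is a genuine gap. Second, the associativity claim amounts precisely to showing that the braiding unitary attached to the pair \((C_1\boxtimes C_2,\,C_3)\) equals \(Z_{13}Z_{23}\) (and symmetrically on the other side), i.e.\ the hexagon identities for \(Z\) relative to the tensor product corepresentations \(\corep{U}^1\tenscorep\corep{U}^2\) and \(\corep{V}^1\tenscorep\corep{V}^2\), so that the embeddings of \(C_2\) and \(C_3\) from the two bracketings coincide. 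You correctly identify this as the main obstacle but do not carry the computation out; since it is the substantive content of the theorem, the proposal stands as a correct plan rather than a complete proof.
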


\section{Landstad-Vaes algebra for the quantum groups with a projection}
\label{sec:Br-slices}
  Let~\(\Qgrp{H}{C}\) be a~\(\Cst\)\nb-quantum group, let 
  \(\DuQgrp{H}{C}\) be its dual, and 
  let~\(\multunit[C]\in\U(\hat{C}\otimes C)\) 
  be the reduced bicharacter of~\(\G[H]\).
  
  An element~\(\projbichar\in\U(\hat{C}\otimes C)\) 
is a \emph{projection} on~\(\G[H]\) if 
\begin{enumerate}
\item \(\projbichar\) is a quantum group endomorphism of~\(\G[H]\): 
\begin{equation}
 \label{eq:proj_bichar_cond}
 (\DuComult[C]\otimes\Id_{C})\projbichar =\projbichar_{23}\projbichar_{13}, 
 \qquad
 (\Id_{\hat{C}}\otimes\Comult[C])\projbichar=\projbichar_{12}\projbichar_{13},
\end{equation}
\item \(\projbichar\) is idempotent: 
for any~\(\G[H]\)\nb-Heisenberg pair~\((\alpha,\hat{\alpha})\) on~\(\Hils_{\alpha}\) 
\begin{equation}
 \label{eq:proj_cond}
  \projbichar_{\hat{\alpha}3}\projbichar_{1\alpha} 
  =\projbichar_{1\alpha}\projbichar_{13}\projbichar_{\hat{\alpha}3}
  \qquad\text{in~\(\U(\hat{C}\otimes\Comp(\Hils_{\alpha})\otimes C)\).}
\end{equation}
\end{enumerate}
The condition~\eqref{eq:proj_cond} above can be also formulated using \(\G[H]\)\nb-anti\nb-Heisenberg pairs.
Suppose~\((\bar{\alpha},\bar{\hat{\alpha}})\) is an~\(\G[H]\)\nb-anti\nb-Heisenberg pair on~\(\Hils_{\bar{\alpha}}\). Then~\cite{MRW2014}*{Proposition 3.9} implies that the representations~\(((\alpha\otimes\bar{\alpha})\circ\Comult[C],(\hat{\alpha}\otimes\bar{\hat{\alpha}})\circ\DuComult[C])\) of~\(C\) and~\(\hat{C}\) on~\(\Hils_{\alpha}\otimes\Hils_{\bar{\alpha}}\) commute. Subsequently, we have
\begin{align*}
&  \bigl((\Id_{\hat{C}}\otimes(\alpha\otimes\bar{\alpha})\Comult[C])\projbichar\bigr)_{123}
  \bigl(((\hat{\alpha}\otimes\bar{\hat{\alpha}})\DuComult[C]\otimes\Id_{C})\projbichar\bigr)_{234}
\\&=  \bigl(((\hat{\alpha}\otimes\bar{\hat{\alpha}})\DuComult[C]\otimes\Id_{C})\projbichar\bigr)_{234}    
\bigl((\Id_{\hat{C}}\otimes(\alpha\otimes\bar{\alpha})\Comult[C])\projbichar\bigr)_{123}
\end{align*}
in~\(\U(\hat{C}\otimes\Comp(\Hils_{\alpha}\otimes\Hils_{\bar{\alpha}})\otimes C)\). The conditions~\eqref{eq:proj_bichar_cond} simplifly the last equation as follows:
\[
     \projbichar_{1\alpha}\projbichar_{1\bar{\alpha}}
     \projbichar_{\bar{\hat{\alpha}}4}\projbichar_{\hat{\alpha}4}
  = \projbichar_{\bar{\hat{\alpha}}4}\projbichar_{\hat{\alpha}4}
     \projbichar_{1\alpha}\projbichar_{1\bar{\alpha}}.
\]
Commuting \(\projbichar^{*}_{1\alpha}\) with~\(\projbichar_{\bar{\hat{\alpha}}4}\) and~\(\projbichar^{*}_{\hat{\alpha}4}\) with~\(\projbichar_{1\bar{\alpha}}\), the last equation becomes 
\[
  \projbichar_{\bar{\hat{\alpha}}4}^{*}\projbichar_{1\bar{\alpha}}
  \projbichar_{\bar{\hat{\alpha}}4}\projbichar^{*}_{1\bar{\alpha}}
  =\projbichar_{1\alpha}^{*}\projbichar_{\hat{\alpha}4}
  \projbichar_{1\alpha}\projbichar^{*}_{\hat{\alpha}4}.
\]
 Hence~\eqref{eq:proj_cond} is equivalent to 
 \begin{equation}
  \label{eq:proj_cond_antiheis}
    \projbichar_{1\bar{\alpha}} \projbichar_{\bar{\hat{\alpha}}3}
  =\projbichar_{\bar{\hat{\alpha}}3}\projbichar_{13}\projbichar_{1\bar{\alpha}} 
  \qquad\text{in~\(\U(\hat{C}\otimes\Comp(\Hils_{\bar{\alpha}})\otimes C)\),}
 \end{equation}
 for any~\(\G[H]\)\nb-anti\nb-Heisenberg pair~\((\bar{\alpha},\bar{\hat{\alpha}})\) on~\(\Hils_{\bar{\alpha}}\).

Suppose~\((\alpha,\hat{\alpha})\) is an~\(\G[H]\)\nb-Heisenberg pair on~\(\Hils_{\alpha}\). Now \(\ProjBichar_{\alpha}\defeq(\hat{\alpha}\otimes\alpha)\projbichar\in\U(\Hils_{\alpha}\otimes\Hils_{\alpha})\) is a manageable mutliplicative unitary, which follows from \cite{MRW2017}*{Proposition 2.5}. Then the \(\Cst\)\nb-quantum group 
\(\Qgrp{G}{A}\) generated by~\(\ProjBichar_{\alpha}\), which does not depend on the choice of the~\(\G[H]\)\nb-Heisenberg pair~\((\alpha,\hat{\alpha})\), and~\(\projbichar\in\U(\hat{A}\otimes A)\subseteq\U(\hat{C}\otimes C)\). Then~\(\G\) is called the \emph{image} of \(\projbichar\).

In particular, we have \(A\subseteq\Mult(C)\). Moreover, the inclusion~\(i\colon A\hookrightarrow\Mult(C)\) is an element of~\(\Mor(A,C)\). 
To see this, once again, let us fix an~\(\G[H]\)\nb-Heisenberg pair~\((\alpha , \hat{\alpha})\) on~\(\Hils_{\alpha}\).   
Then~\eqref{eq:Comult_W} for~\(\Comult[C]\) is equivalent to 
\((\alpha\otimes\Id_{C})\Comult[C](c)=\multunit[C]_{\hat{\alpha}2}(\alpha(c)\otimes 1_{C})\multunit[C]_{\hat{\alpha}2}{ }^*\) for all~\(c\in C\). 
Consequently, the second condition in~\eqref{eq:proj_bichar_cond} is equivalent to~\(\projbichar_{1\alpha}^{*}\multunit[C]_{\hat{\alpha}3}
\projbichar_{1\alpha}=\projbichar_{13}\multunit[C]_{\hat{\alpha}3}\) in~\(\U(\hat{C}\otimes\Comp(\Hils_{\alpha})\otimes C)\). This implies 
\begin{align*}
  C &=\{(\omega_{1}\otimes\omega_{2}\otimes\Id_{C})(\projbichar_{1\alpha}^{*}\multunit[C]_{\hat{\alpha}3}\projbichar_{1\alpha})\mid \omega_{1}\in\hat{C}_{*},\omega_{2}\in\Bound(\Hils_{\alpha})_{*}\}^\CLS \\
     &=\{(\omega_{1}\otimes\omega_{2}\otimes\Id_{C})(\projbichar_{13}\multunit[C]_{\hat{\alpha}3})\mid \omega_{1}\in\hat{C}_{*},\omega_{2}\in\Bound(\Hils_{\alpha})_{*}\}^\CLS
       =AC .
\end{align*}
Since, \(C^*=C\) and~\(A^*=A\), we have~\(C=C^*=(AC)^{*}=C^{*}A^{*}=CA\).

 Now \cite{MRW2017}*{Proposition 2.8} shows that 
 \(\Qgrp{H}{C}\) with projection~\(\projbichar\in\U(\hat{C}\otimes C)\) with image 
 \(\Qgrp{G}{A}\) is equivalent to a quadruple~\((\G,\G[H],i,\Delta_{L})\) 
 consisting of \(\Cst\)\nb-quantum groups \(\Qgrp{G}{A}\), \(\Qgrp{H}{C}\) and
 morphisms \(i\in\Mor(A,C)\) and \(\Delta_{L}\in\Mor(C,A\otimes C)\) such that 
\begin{enumerate}
 \item \(i\) is a Hopf \Star{}homomorphism: \(\Comult[C]\circ i =(i\otimes i)\circ \Comult[A]\),
 \item \(\Delta_{L}\) is a left quantum group homomorphism: 
 \[
    (\Id_{A}\otimes\Comult[C])\circ\Delta_{L}=(\Delta_{L}\otimes\Id_{C})\circ\Comult[C], 
    \quad
    (\Comult[A]\otimes\Id_{C})\circ\Delta_{L}=(\Id_{A}\otimes\Delta_{L})\circ\Delta_{L}, 
 \]
 \item \((C,\Delta_{L},i)\) is a~\(\G\)\nb-product, that is, \((\Delta_{L},i)\) satisfy~\eqref{eq:landstad}.
\end{enumerate} 
In the next result we describe the Landstad-Vaes algebra for this \(\G\)\nb-product. For that matter 
we assume \(\G\) to be a regular \(\Cst\)\nb-quantum group.
\begin{proposition}
  \label{prop:Landstad_slices}
   Define 
  \(\brmultunit \defeq\projbichar^*\multunit[C]\in\U(\hat{C}\otimes C)\).   
  Then
  \[
  D\defeq \{(\omega\otimes\Id_C)\brmultunit\mid
  \omega\in \hat{C}_{*}\}^{\textup{CLS}} \subseteq \Mult(C).
  \]
  is the Landstad-Vaes algebra for the \(\G\)\nb-product 
  \((C,\Delta_{L},i)\). 
 \end{proposition}

First we prove the following technical lemma.
\begin{lemma}
  \label{lemm:comm_F_X}
  Let~\(\multunit\in\U(\hat{A}\otimes A)\) be the reduced 
  bicharacter of~\(\G\). Define \(X\in\U(\hat{A}\otimes
  C)\) by \(X \defeq (\Id_{\hat{A}}\otimes i)\multunit\). Then for 
  any \(\G[H]\)\nb-anti-Heisenberg pair \((\bar{\alpha},\bar{\hat{\alpha}})\)
  on a Hilbert space~\(\Hils_{\bar{\alpha}}\) we have the following 
  relation\textup{:}
  \begin{align}
    \brmultunit_{\bar{\hat{\alpha}}3}X_{13}X_{1\bar{\alpha}}
    &= X_{13}X_{1\bar{\alpha}}\brmultunit_{\bar{\hat{\alpha}}3}
    \qquad\text{in \(\U(\hat{A}\otimes\Comp(\Hils_{\bar{\alpha}})\otimes C)\).}
  \end{align}
\end{lemma}

\begin{proof}
  Since \((\bar{\alpha},\bar{\hat{\alpha}})\) is an \(\G[H]\)\nb-anti-Heisenberg
  pair,
  \begin{align}
    \label{eq:multunit_I_anti_Heis}
    \multunit[C]_{1\bar{\alpha}}\multunit[C]_{ \bar{\hat{\alpha}} 3}
    &=\multunit[C]_{\bar{\hat{\alpha}} 3}\multunit[C]_{13}\multunit[C]_{1 \bar{\alpha}}
    \qquad\text{in }\U(\hat{C}\otimes\Comp(\Hils_{\bar{\alpha}})\otimes C).
  \end{align}
   Combining~\eqref{eq:Comult_W} and~\eqref{eq:multunit_I_anti_Heis} for~\(\Comult[C]\) we 
  can show that 
  \begin{align}
   \label{eq:comult_anti_heis}
    (\Id_{C}\otimes\bar{\alpha})\Comult[C](c) 
    &=\flip(\multunit[C]_{\bar{\hat{\alpha}}2}{}^{*}(\bar{\alpha}(c)\otimes 1_{C})\multunit[C]_{\bar{\hat{\alpha}} 2})
    \qquad\text{for~\(c\in C\).}
 \end{align}
  The unitary \(X\defeq (\Id_{\hat{A}}\otimes i)\multunit\in
  \U(\hat{A}\otimes C)\) is a bicharacter (see~\cite{MRW2012}*{Definition 3.1}) because~\(i\) is a Hopf
  \Star{}homomorphism. So, in particular, \((\Id_{\hat{A}}\otimes\Comult[C])X =
  X_{12}X_{13}\) and it is equivalent to 
  \begin{align}
    \label{eq:char_cond_2ndleg_aHeis}
    X_{1\bar{\alpha}}\multunit[C]_{\bar{\hat{\alpha}}3}
    &=\multunit[C]_{\bar{\hat{\alpha}}3}X_{13}X_{1\bar{\alpha}}
    \qquad\text{in }\U(\hat{A}\otimes\Comp(\Hils_{\bar{\alpha}})\otimes C)
  \end{align}
  by~\eqref{eq:comult_anti_heis}. 

  The unitary \(\Duprojbichar\defeq \flip(\projbichar^{*})\in 
 \U(C\otimes\hat{C})\) is a projection on~\(\DuG[H]\). This 
 defines an injective Hopf \Star{}homomorphism \(\hat{i}\in\Mor(\hat{A},\hat{C})\) 
 such that~\(\projbichar=(\hat{i}\otimes i)\multunit\in\U(\hat{C}\otimes C)\). 
 Recall that~\(\projbichar\) satisfies~\eqref{eq:proj_cond_antiheis}. Since \(\hat{i}\) is injective, we may 
 apply~\(\hat{i}^{-1}\otimes\Id_{\Hils_{\bar{\alpha}}}\otimes\Id_{C}\) on the both sides of~\eqref{eq:proj_cond_antiheis} and obtain
    \begin{align}
    \label{eq:aux_G_anti_Heis}
    X_{1\bar{\alpha}}\projbichar_{\bar{\hat{\alpha}}3} =\projbichar_{\bar{\hat{\alpha}}3}X_{13}
    X_{1\bar{\alpha}}
    \qquad\text{in }\U(\hat{A}\otimes\Comp(\Hils_{\bar{\alpha}})\otimes C).
  \end{align}
  Subsequently, we complete the proof below using~\eqref{eq:char_cond_2ndleg_aHeis} and~\eqref{eq:aux_G_anti_Heis}:  
  \[
    \brmultunit_{\bar{\hat{\alpha}}3}X_{13}X_{1\bar{\alpha}}
    = \projbichar_{\bar{\hat{\alpha}}3}^*\multunit[C]_{\bar{\hat{\alpha}}3}X_{13}X_{1\bar{\alpha}}
    = \projbichar_{\bar{\hat{\alpha}}3}^*X_{1\bar{\alpha}}\multunit[C]_{\bar{\hat{\alpha}}3}
    = X_{13}X_{1\bar{\alpha}}\projbichar_{\bar{\hat{\alpha}}3}^*\multunit[C]_{\bar{\hat{\alpha}}3}\\
    = X_{13}X_{1\bar{\alpha}}\brmultunit_{\bar{\hat{\alpha}}3}.\qedhere
  \]
\end{proof}
\begin{proof}[Proof of Proposition~\textup{\ref{prop:Landstad_slices}}]
Let~\(\multunit\in\U(\hat{A}\otimes A)\) be the reduced bicharacter 
of~\(\G\). Recall the Hopf~\Star{}homomorphisms~\(i\colon A\to\Mult(C)\), 
\(\hat{i}\colon \hat{A}\to\Mult(\hat{C})\) and the bicharacter~\(X=(\Id_{\hat{A}}\otimes i)\multunit\) 
from the proof of Lemma~\ref{lemm:comm_F_X}. The bicharacter~\(X\) corresponds to the 
Hopf~\Star{}homomorphism~\(i\). Similarly, the bicharacter 
\(\chi\defeq (\hat{i}\otimes\Id_A)\multunit \in \U(\hat{C}\otimes A)\) corresponds to the Hopf \Star{}homomorphism \(\hat{i}\). 
These imply 
\begin{equation}
 \label{eq:Mar722}
(\Id_{\hat{C}}\otimes i)\chi=(\hat{i}\otimes i)\multunit=\projbichar
\in\U(\hat{C}\otimes C).
\end{equation} 
This shows that~\(\projbichar\) is the composition of bicharacters (see~\cite{MRW2012}*{Definition 3.5} ) 
viewed as the quantum group homomorphisms: \(\G[H]\stackrel{\chi}{\longrightarrow}\G\stackrel{X}{\longrightarrow}\G[H]\). More precisely, it is defined by
\begin{equation}
  \label{eq:bichar_comp}
  X_{\hat{\pi}3}\chi_{1\pi}=\chi_{1\pi}\projbichar_{13}X_{\hat{\pi}3} 
  \qquad
  \text{in~\(\U(\hat{C}\otimes\Comp(\Hils_{\pi})\otimes C)\),}
\end{equation}
where~\((\pi,\hat{\pi})\) is a~\(\G\)\nb-Heisenberg pair on 
the Hilbert space \(\Hils_{\pi}\). 

Suppose \(\Delta_{L}\in\Mor(C,A\otimes C)\) 
is the left quantum group homomorphism equivalent to~\(\chi\) 
given by~\cite{MRW2012}*{Theorem 5.5}
\begin{align}
  \label{eq:left_hom}
  (\Id_{\hat{C}}\otimes\Delta_L)\multunit[C]
  &=\chi_{12}\multunit[C]_{13}
  \qquad\text{in \(\U(\hat{C}\otimes A\otimes C)\)}.
\end{align}
The Landstad-Vaes algebra~\eqref{eq:explicit-landstad-vaes} 
for the~\(\G\)\nb-product~\((C,\Delta_{L},i)\) 
is given by 
\begin{equation}
 \label{eq:landstad-vaes_aux}
  D=\{(\omega\otimes\Id_{C})\varphi(c)\mid \omega\in\Bound(\Hils_{\pi})_{*}, c\in C\}^\CLS,
\end{equation}
where~\(\varphi(c)=X_{\hat{\pi}2}^{*}\Delta_{L}(c)_{\pi 2}X_{\hat{\pi}2}\) for all~\(c\in C\). Using~\eqref{eq:left_hom} and~\eqref{eq:bichar_comp} 
we have
\begin{align*}
  D
  &=\{(\omega'\otimes\omega\otimes\Id_{C})(X_{\hat{\pi}3}^{*}((\Id_{\hat{C}}\otimes\Delta_{L})\multunit[C])_{1\pi 3}X_{\hat{\pi}3})\mid \omega'\in\hat{C}_{*}, \omega\in\Bound(\Hils_{\pi})_{*}\}^\CLS \\
  &=\{(\omega'\otimes\omega\otimes\Id_{C})(X_{\hat{\pi}3}^{*}\chi_{1\pi}\multunit[C]_{13}X_{\hat{\pi}3})\mid \omega'\in\hat{C}_{*}, \omega\in\Bound(\Hils_{\pi})_{*}\}^\CLS  \\
  &=\{(\omega'\otimes\omega\otimes\Id_{C})(X_{\hat{\pi}3}^{*}\chi_{1\pi}\projbichar_{13}\brmultunit_{13}X_{\hat{\pi}3})\mid \omega'\in\hat{C}_{*}, \omega\in\Bound(\Hils_{\pi})_{*}\}^\CLS  \\
  &=\{(\omega'\otimes\omega\otimes\Id_{C})(\chi_{1\pi}X_{\hat{\pi}3}^{*}\brmultunit_{13}X_{\hat{\pi}3})\mid \omega'\in\hat{C}_{*}, \omega\in\Bound(\Hils_{\pi})_{*}\}^\CLS \\  
  &=\{(\omega'\otimes\omega\otimes\Id_{C})(X_{\hat{\pi}3}^{*}\brmultunit_{13}X_{\hat{\pi}3})\mid \omega'\in\hat{C}_{*}, \omega\in\Bound(\Hils_{\pi})_{*}\}^\CLS  \\
  &=\{(\omega'\otimes\omega\otimes\Id_{C})(X_{23}^{*}\brmultunit_{13}X_{23})\mid \omega'\in\hat{C}_{*}, \omega\in\hat{A}_{*}\}^\CLS  \\  
  &=\{(\omega\otimes\omega'\otimes\Id_{C})(X_{13}^{*}\brmultunit_{23}X_{13})\mid \omega\in\hat{A}_{*}, \omega'\in\hat{C}_{*}\}^\CLS \\
&=\{(\omega\otimes\omega'\otimes\Id_{C})(X_{13}^{*}\brmultunit_{\bar{\hat{\alpha}}3}X_{13})\mid \omega\in\hat{A}_{*}, \omega'\in\Bound(\Hils_{\bar{\alpha}})_{*}\}^\CLS ,  
\end{align*}
where \((\bar{\alpha},\bar{\hat{\alpha}})\) be an~\(\G[H]\)\nb-anti\nb-Heisenberg pair on the Hilbert space~\(\Hils_{\bar{\alpha}}\). Finally, using Lemma~\ref{lemm:comm_F_X} in the last computation we complete the proof below:
\begin{align*}
  D
  &=\{(\omega\otimes\omega'\otimes\Id_{C})(X_{1\bar{\alpha}}\brmultunit_{\bar{\hat{\alpha}}3}X^{*}_{1\bar{\alpha}})\mid \omega\in\hat{A}_{*}, \omega'\in\Bound(\Hils_{\bar{\alpha}})_{*}\}^\CLS \\
  &=\{(\omega'\otimes\Id_{C})\brmultunit_{\bar{\hat{\alpha}}2}\mid \omega'\in\Bound(\Hils_{\bar{\alpha}})_{*}\}^\CLS
    =\{(\omega'\otimes\Id_{C})\brmultunit\mid \omega'\in\hat{C}_{*}\}^\CLS . \qedhere
\end{align*}
\end{proof}
According to~\cite{MR2019}*{Theorem 2.18} an isomorphism between two~\(\Cst\)\nb-quantum groups~\(\G[H]_{1}=\Bialg{C_{1}}\) and~\(\G[H]_{2}=\Bialg{C_{2}}\) is 
a Hopf~\Star{}isomorphism \(f\in\Mor(C_{1},C_{2})\). Let~\(\projbichar_{k}\) be a projection on~\(\G[H]_{k}\) and let~\(\G_{k}=\Bialg{A_{k}}\) be the image of~\(\projbichar_{k}\) for~\(k=1,2\).
\begin{definition}
 \label{def:iso_qgrp_proj}
An \emph{isomorphism} between two~\(\Cst\)\nb-quantum groups with projections~\((\G[H]_{1},\projbichar_{1})\) and~\((\G[H]_{2},\projbichar_{2})\) is a Hopf~\Star{}isomorphism~\(f\in\Mor(C_{1}, C_{2})\) such that the restriction~\(f|_{A_{1}}\) is also Hopf~\Star{}isomorphism between~\(\G_{1}\) and~\(\G_{2}\).
\end{definition}
Let~\(\multunit[A_{1}]\in\U(\hat{A}_{1}\otimes A_{1})\) and~\(\multunit[A_{2}]\in\U(\hat{A}_{2}\otimes A_{2})\) be the reduced bicharacters of~\(\G_{1}\) and~\(\G_{2}\), respectively. Suppose~\(i_{k}\in\Mor(A_{k},C_{k})\) is the Hopf~\Star{}homomorphism induced by~\(\projbichar_{k}\) and~\(\hat{i}_{k}\in\Mor(\hat{A}_{k},\hat{C}_{k})\) be its dual satisfying~\eqref{eq:Mar722} for~\(k=1,2\). In particular, \((\hat{i}_{k}\otimes i_{k})\multunit[A_{k}]=\projbichar_{k}\) for~\(k=1,2\). The isomorphism~\(f\) in the Definition~\ref{def:iso_qgrp_proj} induces the Hopf~\Star{}isomorphism~\(f_{A}\in\Mor(A_{1},A_{2})\) such that~\(f\circ i_{1}=i_{2}\circ f_{A}\).

Let~\(\multunit[C_{1}]\in\U(\hat{C}_{1}\otimes C_{1})\) and~\(\multunit[C_{2}]\in\U(\hat{C}_{2}\otimes C_{2})\) be the reduced bicharacters of~\(\G[H]_1\) and~\(\G[H]_{2}\), respectively. Then a Hopf~\Star{}isomorphism \(f\in\Mor(C_{1},C_{2})\) is equivalent to the dual Hopf~\Star{}isomorphism~\(\hat{f}\in\Mor(\hat{C}_{1},\hat{C}_{2})\) which is characterised by the following equation:~\((\hat{f}\otimes f)\multunit[C_{1}]=\multunit[C_{2}]\in\U(\hat{C}_{1}\otimes C_{2})\). 
Now for any~\(\G[H]_{2}\)\nb-Heisenberg pair~\((\alpha,\hat{\alpha})\) on~\(\Hils\), the pair~\((\alpha\circ f,\hat{\alpha}\circ\hat{f})\) of representations of~\(C_{1}\) and~\(\hat{C}_{1}\) on~\(\Hils\) is an~\(\G[H]_{1}\)\nb-Heisenberg pair. Therefore~\((\hat{\alpha}\circ\hat{f}\otimes\alpha\circ f)\projbichar_{1}\) is a manageable multiplicative unitary and generates~\(\Bialg{f\circ i_{1}(A_{1})}\). By duality, the restriction~\(\hat{f}|_{\hat{A}_{1}}\) defines a Hopf~\Star{}isomorphism between~\(\DuG_{1}\) and~\(\DuG_{2}\) (inside~\(\DuG[H]_{1}\) and~\(\DuG[H]_{2}\)) and it is the dual of~\(f|_{A}\). Hence, we get the Hopf~\Star{}isomorphism~\(\hat{f}_{A}\in\Mor(\hat{A}_{1},\hat{A}_{2})\) such that~\(\hat{f}\circ \hat{i}_{1}=\hat{i}_{2}\circ \hat{f}_{A}\) and~\((\hat{f}_{A}\otimes f_{A})\multunit[A_{1}]=\multunit[A_{2}]\). Then~\eqref{eq:Mar722} gives 
\begin{equation}
 \label{eq:projiso-rel}
(\hat{f}\otimes f)\projbichar_{1} 
=(\hat{f}\circ\hat{i}_{1}\otimes f\circ i_{1})\multunit[A_{1}]
=(\hat{i}_{2}\circ \hat{f}_{A}\otimes i_{2}\circ f_{A} )\multunit[A_{1}]
=(\hat{i}_{2}\otimes i_{2})\multunit[A_{2}]
=\projbichar_{2}.
\end{equation}
Therefore, \((\hat{f}\otimes f)(\projbichar_{1}^{*}\multunit[C_{1}])=\projbichar_{2}^{*}\multunit[C_{2}]\).
Consequently, \(f\) defines an isomorphism between the Landstad\nb-Vaes algebras in Proposition~\ref{prop:Landstad_slices} for the~\(\G_{1}\)\nb-product and the \(\G_{2}\)\nb-product associated to~\((\G[H]_{1},\projbichar_{1})\) and~\((\G[H]_{2},\projbichar_{2})\) are also isomorphic. We shall use these facts later in Section~\ref{sec:boson}.

\section{From braided multiplicative unitaries to quantum groups with projection}
\label{sec:br_mult}
  Let~\(\Qgrp{G}{A}\) be a~\(\Cst\)\nb-quantum group, let 
  \(\DuQgrp{G}{A}\) be its dual, and 
  let~\(\multunit\in\U(\hat{A}\otimes A)\) 
  be the reduced bicharacter of~\(\G\).
 
 Let~\(\Hils[L]\) be a Hilbert space.  Consider a 
 pair of representations~\((\corep{U},\corep{V})\) of 
 \(\G\) and~\(\DuG\) on~\(\Hils[L]\) satisfying the commutation 
 relation
    \begin{equation}
      \label{eq:U_V_compatible_1}
      \corep{V}_{12}\corep{U}_{13} \multunit_{23}
      = \multunit_{23} \corep{U}_{13}\corep{V}_{12}
      \quad\text{in }\U(\Comp(\Hils[L])\otimes\hat{A}\otimes A).
    \end{equation}
 The pair~\((\corep{U},\corep{V})\) corresponds to a representation 
 of the quantum codouble~\(\Codouble{\G}\), the dual of the Drinfeld 
 double~\(\mathfrak{D}(\G)\), of~\(\G\) on~\(\Hils[L]\) and it is 
 called~\(\Codouble{\G}\)\nb-pair on~\(\Hils[L]\), see~\cite{R2015a}. In~\cite{MRW2016}*{Section 5}, it was observed that the representation category of~\(\Codouble{\G}\) is a unitarily braided monoidal (tensor product of representations of~\(\Codouble{\G}\)) category. We fix a \(\Codouble{\G}\)\nb-pair \((\corep{U},\corep{V})\) on~\(\Hils[L]\) and define~\(\Braiding{\Hils[L]}{\Hils[L]}\defeq Z\circ \Flip\), where 
 \(Z\in\U(\Hils[L]\otimes\Hils[L])\) is the solution of~\eqref{eq:braid}. In fact, \(\Braiding{\Hils[L]}{\Hils[L]}\) is the braiding isomorphism for the pair of objects \(((\corep{U},\corep{V}),(\corep{U},\corep{V}))\) in the representation category of~\(\Codouble{\G}\).

\begin{definition}[compare with~\cite{MRW2017}*{Definition 3.2}]
  \label{def:braided_multiplicative_unitary}
  A \emph{braided multiplicative unitary on~\(\Hils[L]\) over 
  \(\G\) relative to \((\corep{U},\corep{V})\)} is a unitary 
  \(\BrMultunit\in\U(\Hils[L]\otimes\Hils[L])\) 
  such that 
  \begin{enumerate}
  \item \(\BrMultunit\) is \emph{invariant} with respect to the
    tensor product representation \(\corep{U} \tenscorep \corep{U} \defeq
    \corep{U}_{13}\corep{U}_{23}\) of~\(\G\)
    on~\(\Hils[L]\otimes\Hils[L]\):
    \begin{equation}
      \label{eq:F_U-invariant}
      \corep{U}_{13} \corep{U}_{23} \BrMultunit_{12}
      = \BrMultunit_{12} \corep{U}_{13} \corep{U}_{23}
      \quad\text{in }\U(\Comp(\Hils[L]\otimes\Hils[L])\otimes A);
    \end{equation}
  \item \(\BrMultunit\) is \emph{invariant} with respect to the 
    tensor product representation \(\corep{V} \tenscorep \corep{V} \defeq
    \corep{V}_{13}\corep{V}_{23}\) of~\(\DuG\)
    on~\(\Hils[L]\otimes\Hils[L]\):
    \begin{equation}
      \label{eq:F_V-invariant_1}
      \corep{V}_{13} \corep{V}_{23} \BrMultunit_{12}
      = \BrMultunit_{12} \corep{V}_{13} \corep{V}_{23}
      \quad\text{in }\U(\Comp(\Hils[L]\otimes\Hils[L])\otimes\hat{A});
    \end{equation}
  \item \(\BrMultunit\) satisfies the \emph{braided pentagon equation}~\eqref{eq:top-braided_pentagon}.
  \end{enumerate}
 \end{definition}

 Let \((\pi,\hat{\pi})\) be the \(\G\)\nb-Heisenberg
 pair on~\(\Hils\) coming from a manageable multiplicative
 unitary \(\Multunit\in\U(\Hils\otimes\Hils)\) generating~\(\G\), 
 that is, \((\hat{\pi}\otimes\pi)\multunit=\Multunit\). Using 
 it, we define the unitaries 
 \(\Ducorep{V}\in\U(\hat{A}\otimes\Comp(\Hils[L]))\), 
 \(\Corep{U},\Corep{V}\in\U(\Hils[L]\otimes\Hils)\) and 
 \(\DuCorep{V}\in\U(\Hils\otimes\Hils[L])\) by 
 \[
   \Ducorep{V}\defeq \flip(\corep{V}^{*}), 
   \quad
   \Corep{U}\defeq (\Id_{\Hils[L]}\otimes\pi)\corep{U}, 
   \quad 
  \Corep{V}\defeq (\Id_{\Hils[L]}\otimes\hat{\pi})\corep{V}, 
  \quad
  \DuCorep{V}\defeq \Flip \Corep{V}^{*}\Flip=(\hat{\pi}\otimes\Id_{\Hils[L]})\Ducorep{V}. 
 \]
Then~\eqref{eq:braid} and~\eqref{eq:U_V_compatible_1}
 for~\(\corep{U}\) and \(\corep{V}\) are equivalent to
 \begin{alignat}{2} 
    \label{eq:braiding}
      Z_{13} &= \DuCorep{V}_{23} \Corep{U}_{12}^*
      \DuCorep{V}_{23}^* \Corep{U}_{12}
      &\quad\text{in \(\U(\Hils[L]\otimes\Hils\otimes\Hils[L])\);}\\
       \label{eq:U_V_compatible}
      \Corep{U}_{23} \Multunit_{13} \DuCorep{V}_{12}
      &= \DuCorep{V}_{12} \Multunit_{13} \Corep{U}_{23}
      &\quad\text{in \(\U(\Hils\otimes\Hils[L]\otimes\Hils)\).}
\end{alignat}
 Now~\(\BrMultunit\) gives rise to a pair of multiplicative unitaries
\(\Multunit[C], \ProjBichar\in\U(\Hils\otimes\Hils[L]\otimes\Hils\otimes\Hils[L])\) 
given by \cite{MRW2017}*{Theorem 3.7}:
  \begin{alignat}{2}
    \label{eq:big_MU}
    \Multunit[C] &\defeq
    \Multunit_{13}\Corep{U}_{23}\DuCorep{V}^*_{34}\BrMultunit_{24}\DuCorep{V}_{34}
    &\qquad\text{in }\U(\Hils\otimes\Hils[L]\otimes\Hils\otimes\Hils[L]),\\
    \label{eq:Proj_Unit}
    \ProjBichar &\defeq \Multunit_{13}\Corep{U}_{23}
    &\qquad\text{in }\U(\Hils\otimes\Hils[L]\otimes\Hils\otimes\Hils[L]).
 \end{alignat}   
Suppose~\(\BrMultunit\) is manageable in the sense of~\cite{MRW2017}*{Definition 3.5}. Then we translate it to the manageability of the ordinary multiplicative unitary \(\Multunit[C]\) using~\cite{MRW2017}*{Theorem 3.8}. On the other hand, manageability of~\(\ProjBichar\) follows from the manageability of~\(\Multunit\). Let  \(\Qgrp{H}{C}\) be the \(\Cst\)\nb-quantum group generated by~\(\Multunit[C]\). Then~\cite{MRW2017}*{Theorem 3.7} and 
 \cite{MRW2012}*{Lemma 3.2} imply 
 \(\ProjBichar\in\U(\hat{C}\otimes C)\) is a projection on~\(\G[H]\).  In the 
 next lemma we ensure that the image of~\(\ProjBichar\) is~\(\G\).
\begin{lemma}
  \label{lemm:rep_hat-A}
  Let~\((\pi,\hat{\pi})\) be a \(\G\)\nb-Heisenberg pair on 
  \(\Hils\). There is a faithful representation \(\hat{\rho}\colon\hat{A}\to
  \Bound(\Hils\otimes\Hils[L])\) such that
  \((\hat{\rho}\otimes\pi)\multunit =
  \Multunit_{12}\Corep{U}_{13}\in\U(\Hils\otimes\Hils[L]\otimes\Hils)\). 
  Define \(\rho\colon A\to\Bound(\Hils\otimes\Hils[L])\) by 
  \(\rho(a)\defeq \pi(a)\otimes 1\). Then~\(\ProjBichar=(\hat{\rho}\otimes\rho)
  \multunit\) and generates the \(\Cst\)\nb-quantum group~\(\Qgrp{G}{A}\). In 
  particular, \(\G\) is a Woronowicz closed quantum subgroup of~\(\G[H]\) as 
  in~\textup{\cite{DKSS2012}*{Definition 3.2}}.
\end{lemma}
\begin{proof}
  Let \((\bar{\alpha},\bar{\hat{\alpha}})\) be a \(\G\)\nb-anti-Heisenberg pair on a
  Hilbert space~\(\Hils_{\bar{\alpha}}\).  By virtue 
  of~\eqref{eq:char_cond_2ndleg_aHeis}, the 
  equation~\eqref{eq:corep_cond} for~\(\corep{U}\) is equivalent to
  \[
  \corep{U}_{1\bar{\alpha}}\multunit_{\bar{\hat{\alpha}} 3}
  = \multunit_{\bar{\hat{\alpha}} 3}\corep{U}_{13}\corep{U}_{1\bar{\alpha}}
  \qquad\text{in }\U(\Comp(\Hils[L]\otimes\Hils_{\bar{\alpha}})\otimes A),
  \]
   Applying~\(\flip_{12}\) on both sides of the last equation and rearranging unitaries we obtain
  \begin{equation}
   \label{eq:faithful_rep_proj}
  \Ducorep{U}_{\bar{\alpha} 2}^*\multunit_{\bar{\hat{\alpha}} 3} \Ducorep{U}_{\bar{\alpha} 2}
  = \multunit_{\bar{\hat{\alpha}} 3}\corep{U}_{23}
  \qquad\text{in } \U(\Comp(\Hils_{\bar{\alpha}}\otimes\Hils[L])\otimes A).
 \end{equation}
  Here \(\Ducorep{U}\defeq \flip(\corep{U}^*)\in
  \U(A\otimes\Comp(\Hils[L]))\). Define a faithful 
  representation~\(\hat\rho'\colon \hat{A}\to\Bound(\Hils_{\bar{\alpha}}\otimes\Hils[L])\) 
  by \(\hat{\rho}'(\hat{a})
  \defeq \Ducorep{U}^*_{\bar{\alpha} 2}(\bar{\hat{\alpha}}(\hat{a})\otimes 1)
  \Ducorep{U}_{\bar{\alpha} 2}\).  The right hand side of~\eqref{eq:faithful_rep_proj} 
  implies the first component of~\(\hat{\rho}'(\hat{A})\) is inside the image of~\(\bar{\hat{\alpha}}\). 
  Also the representations~\(\bar{\hat{\alpha}}\), \(\hat{\pi}\) are faithful by~\cite{R2015a}*{Proposition 3.2}. 
  These allow to define the desired 
  representation~\(\hat{\rho}\colon \hat{A}\to\Bound(\Hils\otimes\Hils[L])\) by 
  \(\hat{\rho}(\hat{a}) \defeq
  (\hat{\pi}\circ\bar{\hat{\alpha}}^{-1}\otimes\Id_{\Hils[L]}) 
  \hat{\rho}'(\hat{a})\).  Then~\(\hat{\rho}\) is faithful and satisfies
  \((\hat{\rho}\otimes\pi)\multunit =
  \Multunit_{13}\Corep{U}_{23}\) by~\eqref{eq:faithful_rep_proj}. 
  Since, \(\ProjBichar=(\hat{\rho}\otimes\rho)\multunit\) is a manageable multiplicative unitary and 
  \(\rho\) is a faithful representation of~\(A\) on~\(\Bound(\Hils\otimes\Hils[L])\), we have 
  \(\pi(A)\otimes 1_{\Hils[L]}=\{(\omega\otimes\Id_{\Hils\otimes\Hils[L]})\ProjBichar\mid \omega\in\Bound(\Hils\otimes\Hils[L])_{*}\}\). 
  Finally, a simple computation using Theorem~\ref{the:Cst_quantum_grp_and_mult_unit} shows that,~\(\ProjBichar\) implements the comultiplication 
  map~\(\Comult[A]\) on~\(\rho(A)\): \((\rho\otimes\rho)\Comult[A](a)=\ProjBichar (\rho(a)\otimes 1)\ProjBichar^{*}\) for all~\(a\in A\).
\end{proof}
  Let us identify \(C\), \(\hat{C}\) with 
  their images inside~\(\Bound(\Hils\otimes\Hils[L]\otimes\Hils
  \otimes\Hils[L])\) under the representations obtained from 
  the \(\G[H]\)\nb-Heisenberg pair 
  that arises from the manageable multiplicative unitary 
  \(\Multunit[C]\) in~\eqref{eq:big_MU}.
 We also notice that the images of~\(\rho\) and~\(\hat{\rho}\) are contained 
 inside the images of~\(C\) and~\(\hat{C}\) in  
 \(\Bound(\Hils\otimes\Hils[L])\), respectively.
  
  \begin{proposition}
   \label{prop:G-prod_proj}
    The unitary \(\chi\defeq (\hat{\rho}\otimes\Id_{A})\multunit\in 
    \U(\hat{C}\otimes A)\) is a bicharacter from 
    \(\G[H]\) to~\(\G\). Suppose~\(\Delta_{L}\in\Mor(C, A\otimes C)\) is the 
    left quantum group homomorphism associated to~\(\chi\). 
    The \(\Cst\)\nb-quantum group~\(\Qgrp{H}{C}\) with projection 
    \(\ProjBichar\) with image~\(\G\) is equivalent to the quadruple 
    \((\G,\G[H],\rho,\Delta_{L})\) described 
    in~\textup{\cite{MRW2017}*{Proposition 2.8}}.
  \end{proposition}
  \begin{proof}  
  Recall~\(\ProjBichar=(\hat{\rho}\otimes\rho)\multunit\in\U(\hat{C}\otimes C)\) and, 
  in particular, \(\rho\in\Mor(A,C)\) is faithful. 
  Then the first condition in~\eqref{eq:proj_bichar_cond} and 
  \eqref{eq:dual_Comult_W} together give  
  \[
    (\DuComult[C]\circ\hat{\rho}\otimes\rho)\multunit   
    =(\DuComult[C]\otimes\Id_{C})\ProjBichar   
    =\ProjBichar_{23}\ProjBichar_{13}
    =((\hat{\rho}\otimes\hat{\rho})\circ\DuComult[A]\otimes\rho)\multunit.
  \]
 Taking slices on the third leg of the 
 last expression by~\(\omega\in C'\) shows that 
 \(\hat{\rho}\in\Mor(\hat{A},\hat{C})\) is a 
 Hopf \Star{}homomorphism. Similarly, we can prove that 
 \(\rho\in\Mor(A,C)\) is also a Hopf~\Star{}homomorphism.
 
 Thus~\(\chi\defeq (\hat{\rho}\otimes\Id_{A})\multunit\in 
 \U(\hat{C}\otimes A)\) is a bicharacter from~\(\G[H]\) to~\(\G\) and 
 the composition~\(\G[H]\to\G\to\G[H]\) is the 
 bicharacter \((\Id_{\hat{C}}\otimes\rho)\chi=\ProjBichar\).
 
 Let~\(\Delta_{R}\in\Mor(C,C\otimes A)\) be the right 
 quantum group homomorphism equivalent to~\(\chi\). 
 Then~\cite{MRW2012}*{Theorem 5.3} 
 and Lemma~\ref{lemm:rep_hat-A} imply
 \[
   (\Id_{\hat{A}}\otimes\Delta_{R}\circ\rho)\multunit
   =\chi_{23}\multunit_{1\rho}\chi_{23}^{*}
   =\multunit_{\hat{\rho}3}\multunit_{1\rho}\multunit_{\hat{\rho}3}
   =\multunit_{1\rho}\multunit_{13}
   =(\Id_{\hat{A}}\otimes(\rho\otimes\Id_{A})\circ\Comult[A])\multunit .
 \]
 Taking slices on the first leg of the 
 last expression by~\(\omega\in\hat{A}'\) gives 
 \(\Delta_{R}\circ\rho=(\rho\otimes\Id_{A})\circ\Comult[A]\). 
 Finally, \((\Delta_{L},\rho)\) is equivalent to~\((\Delta_{R},\rho)\) 
 \cite{MRW2017}*{Proposition 2.8} and 
 \((\Delta_{L},\rho)\) satisfies~\eqref{eq:landstad}.
\end{proof}

\section{The main results} 
 \label{sec:brd_Cstar_qgrp}
 Borrowing the same notations from the last section we state and 
 prove the first main result of this article.
\begin{theorem}
  \label{the:braid_Qgrp}
  Suppose \(\BrMultunit\in\U(\Hils[L]\otimes\Hils[L])\) is a manageable
  braided multiplicative unitary over a regular 
  \(\Cst\)\nb-quantum group 
  \(\Qgrp{G}{A}\) relative to~\((\corep{U},\corep{V})\).  
  Define
  \begin{equation}
   \label{eq:slice_first_brmult}
   B \defeq \{(\omega\otimes\Id_{\Hils[L]})\BrMultunit \mid
    \omega\in\Bound(\Hils[L])_*\}^{\textup{CLS}}, 
    \quad 
  \Comult[B](b)\defeq \BrMultunit (b\otimes 1_{\Hils[L]})\BrMultunit^{*}  
  \quad
  \text{for all~\(b\in B\).}
  \end{equation}
  Then
  \begin{enumerate}
  \item \(B\) is a
    nondegenerate, separable \(\Cst\)\nb-subalgebra of~\(\Bound(\Hils[L])\);
  \item The elements \(\beta\in\Mor(B,\Comp(\Hils[L])\otimes A)\) and
    \(\hat{\beta}\in\Mor(B,\Comp(\Hils[L])\otimes\hat{A})\) defined by
    \begin{equation}
      \label{eq:YD_on_B}
      \beta(b)\defeq \corep{U}(b\otimes 1)\corep{U}^* ,
      \qquad
      \hat{\beta}(b)\defeq \corep{V}(b\otimes 1)\corep{V}^*
    \end{equation}
    are actions of \(\G\) and~\(\DuG\) on~\(B\). Moreover, 
    \((B,\beta,\widehat{\beta})\) is an object of the category~\(\YDcat(\G)\).
 \item\label{eq:multiplier_F} \(\BrMultunit\in\U(\Comp(\Hils[L])\otimes B)\);
 \suspend{enumerate}
 Suppose \(j_1,j_2\in\Mor(B,B\boxtimes B)\) be the canonical
    morphisms in~\eqref{eq:twisted_tensor}. 
 \resume{enumerate}   
 \item Then~\(\Comult[B]\) is the unique arrow~\(B\to B\boxtimes B\) in the 
 category~\(\YDcat(\G)\) characterised by
    \begin{equation}
      \label{eq:brd_chr_leg2}
      (\Id_{\Hils[L]}\otimes\Comult[B])\BrMultunit
      = (\Id_{\Hils[L]}\otimes j_{1})\BrMultunit 
      (\Id_{\Hils[L]}\otimes j_{2})\BrMultunit
      \qquad\text{in }\U(\Comp(\Hils[L])\otimes B\boxtimes B).
    \end{equation}
    Furthermore, \(\Comult[B]\) is coassociative \textup{:}
    \begin{equation}
      \label{eq:br_coasso}
      (\Id_B\boxtimes\Comult[B])\circ\Comult[B]
      = (\Comult[B]\boxtimes\Id_B)\circ\Comult[B],
    \end{equation}
    and satisfies the cancellation conditions:
    \begin{equation}
      \label{eq:br_cancel}
      j_1(B)\Comult[B](B)=B\boxtimes B=\Comult[B](B)j_2(B).
    \end{equation}
  \end{enumerate}    
\end{theorem}

\begin{proof} 
 The image of~\(\ProjBichar\) is \(\Qgrp{G}{A}\) by 
 Lemma~\ref{lemm:rep_hat-A} and \(\G\) is regular 
 by assumption. Then we apply Proposition~\ref{prop:Landstad_slices}  
 for the \(\G\)\nb-triple~\((C,\Delta_{L},\rho)\) constructed 
 in Proposition~\ref{prop:G-prod_proj}.

 \textbf{Part (1):}~Since 
  \begin{align*}
  D &=\{(\omega'\otimes\omega\otimes\Id_{\Hils\otimes\Hils[L]})\ProjBichar^*\Multunit[C]\mid
  \text{\(\omega'\in\Bound(\Hils)_*\), \(\omega\in\Bound(\Hils[L])_{*}\)}\}^\CLS \\
  &=\{(\omega\otimes\Id_{\Hils\otimes\Hils[L]})\DuCorep{V}^*_{23}\BrMultunit_{13}
  \DuCorep{V}_{23}\mid \omega\in\Bound(\Hils[L])_* \}^{\textup{CLS}}\\ 
  &=\DuCorep{V}^* \bigl(1_{\Hils} \otimes\{(\omega\otimes\Id_{\Hils[L]})\BrMultunit\mid \omega\in\Bound(\Hils[L])_* \}^{\textup{CLS}}\bigr) \DuCorep{V}
  \end{align*}
  is a \(\Cst\)\nb-algebra by Proposition~\ref{prop:Landstad_slices}; hence so is 
  \(B\defeq \{(\omega\otimes\Id_{\Hils[L]})\BrMultunit\mid
  \omega\in\Bound(\Hils[L])_* \}^{\textup{CLS}}\subseteq
  \Bound(\Hils[L])\).
  
  Furthermore, the second condition in Theorem~\ref{the:landstad} gives 
  \(DC=C\). Also \(C\Comp(\Hils\otimes\Hils[L])=\Comp(\Hils\otimes 
  \Hils[L])\) because \(C\) is constructed from the manageable multiplicative 
  unitary~\(\Multunit[C]\), and 
  \(\DuCorep{V}\in\U(\Hils\otimes\Hils[L])\). Therefore, 
  \begin{multline*}
   (1_{\Hils}\otimes B)\Comp(\Hils\otimes\Hils[L]) 
   =\DuCorep{V}D\DuCorep{V}^{*}\Comp(\Hils\otimes\Hils[L]) 
   =\DuCorep{V}D\Comp(\Hils\otimes\Hils[L])
   =\DuCorep{V}DC\Comp(\Hils\otimes\Hils[L])\\
   =\DuCorep{V}C\Comp(\Hils\otimes\Hils[L])
   =\DuCorep{V}\Comp(\Hils\otimes\Hils[L])
   =\Comp(\Hils\otimes\Hils[L]).
  \end{multline*}
  Thus \(B\) acts nondegenerately on~\(\Hils[L]\). Separability 
  of~\(B\) follows from the separability of~\(\Bound(\Hils[L])_{*}\).

  \textbf{Part (2):}~Define \(\hat{\beta}(b)\defeq \corep{V}(b\otimes
  1_{\hat{A}})\corep{V}^*\) for \(b\in B\).  Clearly, \(\hat\beta\) is
  injective. The unitary~\(X\) in Lemma~\ref{lemm:comm_F_X}  
  is \((\Id_{\hat{A}}\otimes\rho)\multunit=\Multunit_{12}\) and 
  third condition in Theorem~\ref{the:landstad} becomes
  \begin{equation} 
   \label{eq:Poldes_Ducoat-aux}
  \hat{\pi}(\hat{A})\otimes \DuCorep{V}^*(1_{\Hils}\otimes B)\DuCorep{V}
  = (\hat{\pi}(\hat{A})\otimes 1_{\Hils\otimes\Hils[L]})
  \Multunit[*]_{12}\DuCorep{V}^*_{23}
  (1_{\Hils}\otimes 1_{\Hils[L]}\otimes B)
  \DuCorep{V}_{23}\Multunit_{12}.
  \end{equation}
  Now the condition~\eqref{eq:corep_cond} for the 
  representation~\(\corep{V}\) is equivalent to
   \begin{align*}
     \DuCorep{V}_{23} \Multunit_{12}
      &= \Multunit_{12} \DuCorep{V}_{13} \DuCorep{V}_{23}
      \qquad\text{in }\U(\Hils\otimes\Hils\otimes\Hils[L]).
   \end{align*}
  Using it we simplify the right hand side of~\eqref{eq:Poldes_Ducoat-aux}:
  \begin{align*}
  & (\hat{\pi}(\hat{A})\otimes 1_{\Hils\otimes\Hils[L]})
  \Multunit[*]_{12}\DuCorep{V}^*_{23}
  (1_{\Hils}\otimes 1_{\Hils[L]}\otimes B)
  \DuCorep{V}_{23}\Multunit_{12}\\
  & =(\hat{\pi}(\hat{A})\otimes 1_{\Hils\otimes\Hils[L]})
    \DuCorep{V}^*_{23}\DuCorep{V}^*_{13}\Multunit[*]_{12}
    (1_{\Hils}\otimes 1_{\Hils[L]}\otimes B)
    \Multunit_{12}\DuCorep{V}_{13}\DuCorep{V}_{23}\\
    &=(\hat{\pi}(\hat{A})\otimes 1_{\Hils\otimes\Hils[L]})
    \DuCorep{V}^*_{23}\DuCorep{V}^*_{13}
    (1_{\Hils}\otimes 1_{\Hils[L]}\otimes B)\DuCorep{V}_{13}\DuCorep{V}_{23}.
  \end{align*}
  Injectivity of~\(\hat{\pi}\) implies
  \begin{equation}
    \label{eq:Poldes_Ducoat}
    \hat{A}\otimes B
    = (\hat{A}\otimes 1_{\Hils[L]})
    \Ducorep{V}^*(1_{\Hils}\otimes B)\Ducorep{V}.
  \end{equation}
  This is equivalent to Podle\'s condition (up to~\(\flip\)).
  Thus \(\hat{\beta}\in\Mor(B,B\otimes \hat{A})\) and the 
  condition~\eqref{eq:corep_cond} for 
  \(\corep{V}\) yields~\eqref{eq:right_action} for~\(\hat{\beta}\)

  Similarly, \(\beta(b)\defeq \corep{U}(b\otimes 1_A)\corep{U}^*\)
  is injective, and it is sufficient to establish the Podle\'s condition 
  for~\(\beta\). Then
  \((B,\beta,\hat\beta)\) will become a \(\G\)\nb-Yetter\nb-Drinfeld
  \(\Cst\)\nb-algebra because the unitaries 
  \(\Corep{U},\DuCorep{V}\) satisfy the commutation 
  relation~\eqref{eq:U_V_compatible_1}.

  By virtue of the second condition in Theorem~\ref{the:landstad} 
  \(C=\rho(A)D=(\pi(A)\otimes 1_{\Hils[L]}) \DuCorep{V}^*(1_{\Hils}\otimes
  B)\DuCorep{V}\). Recall the right quantum group homomorphism 
  \(\Delta_R\in\Mor(C,C\otimes A)\) equivalent to the 
  bicharacter~\(\chi=(\hat{\rho}\otimes\pi)\multunit=\Multunit_{13}\Corep{U}_{23}\) 
  in Proposition~\ref{prop:G-prod_proj}. In particular, \(\Delta_{R}\) is an 
  action of~\(\G\) on~\(C\), see \cite{MRW2012}*{Lemma 5.8}. Combining 
  the Podle\'s condition for~\(\Delta_{R}\) and~\cite{MRW2012}*{Equation (33)} 
  we get
  \begin{multline*}
  (\pi(A)\otimes 1_{\Hils[L]\otimes\Hils}) 
  \DuCorep{V}^*_{12} (1_{\Hils}\otimes B\otimes 1_{\Hils}) 
  \DuCorep{V}_{12}\Corep{U}_{23}^*
  \Multunit[*]_{13}(1_{\Hils\otimes\Hils[L]}\otimes\pi(A))
  \\= \Corep{U}_{23}^*\Multunit[*]_{13}
    (\pi(A)\otimes 1_{\Hils[L]\otimes\Hils}) 
    \DuCorep{V}^*_{12} (1_{\Hils}\otimes B\otimes 1_{\Hils}) 
    \DuCorep{V}_{12}(1_{\Hils\otimes\Hils[L]}\otimes\pi(A)) .
  \end{multline*}
  Multiplying~\(\Comp(\Hils)\) to the first leg from left and 
  right of the last equation 
  and using the nondegeneracy of \(\pi\), that is \(\pi(A)\Comp(\Hils)=\Comp(\Hils)\), 
  we obtain
  \begin{multline*}
    (\Comp(\Hils)\otimes 1_{\Hils[L]\otimes\Hils}) 
    \DuCorep{V}^*_{12}(1_{\Hils}\otimes B\otimes 1_{\Hils}) 
    \DuCorep{V}_{12}\Corep{U}_{23}^*
    \Multunit[*]_{13}(\Comp(\Hils)\otimes 1_{\Hils[L]}\otimes\pi(A))
    \\=
    (\Comp(\Hils)\otimes 1_{\Hils[L]\otimes\Hils}) \Corep{U}_{23}^*\Multunit[*]_{13}
    (\pi(A)\otimes 1_{\Hils[L]\otimes\Hils})\DuCorep{V}^*_{12}
    (1_{\Hils}\otimes B\otimes 1_{\Hils}) \DuCorep{V}_{12}
    (\Comp(\Hils)\otimes 1_{\Hils[L]}\otimes\pi(A)).
  \end{multline*}
  Similarly, the nondegeneracy of~\(\hat{\pi}\) 
  and~\eqref{eq:Poldes_Ducoat} together imply
  \begin{multline*}
  (\Comp(\Hils)\otimes B\otimes 1_{\Hils})\Corep{U}_{23}^*
  \Multunit[*]_{13}(\Comp(\Hils)\otimes 1_{\Hils[L]}\otimes\pi(A))
  \\= (\Comp(\Hils)\otimes 1_{\Hils[L]\otimes\Hils})
  \Corep{U}_{23}^*\Multunit[*]_{13}
  (\pi(A)\Comp(\Hils)\otimes B\otimes \pi(A)).
  \end{multline*}
  Next we apply Theorem~\ref{the:Cst_quantum_grp_and_mult_unit}~(2),  
  that is \(\Multunit(\Comp(\Hils)\otimes\pi(A))=\Comp(\Hils)\otimes\pi(A)\),
  to simplify the last equation   
  \[
  \Comp(\Hils)\otimes \bigl((B\otimes 1_{\Hils})\Corep{U}^*
  (1_{\Hils[L]}\otimes\pi(A)\bigr)
  = \Comp(\Hils)\otimes \bigl(\Corep{U}^*
     (B\otimes\pi(A)\bigr).
  \]
  Finally,  taking slices by \(\omega\in\Bound(\Hils)_{*}\) on the first leg and
  and then multiplying the last equation by~\(\Corep{U}\) from the left, we obtain
  \[
  \Corep{U} (B\otimes 1_{\Hils}) \Corep{U}^*
  (1_{\Hils[L]}\otimes\pi(A)) = B\otimes \pi(A).
  \]
 This is equivalent to the Podle\'s condition for~\(\beta\) as~\(\pi\) is injective.
 
  \textbf{Part (3):}~Once again, recall the 
  second condition in the Landstad theorem~\ref{the:landstad}:
  \(C=(\pi(A)\otimes 1_{\Hils[L]})\DuCorep{V}^{*}(1\otimes B) 
  \DuCorep{V}\subset\Bound(\Hils\otimes\Hils[L])\). 
  Since \(\Multunit[C]\) is a unitary multiplier of 
  \(\Comp(\Hils\otimes\Hils[L])\otimes C\) we have 
  \((\Comp(\Hils)\otimes\Comp(\Hils[L])\otimes C)\Multunit[C]= 
  \Comp(\Hils)\otimes\Comp(\Hils[L])\otimes C\). 

Equivalently, 
  \[
   \DuCorep{V}^{*}_{34}B_{4}\DuCorep{V}_{34}\Comp(\Hils)_{1}
   \Comp(\Hils[L])_{2}\pi(A)_{3}\Multunit_{13}\Corep{U}_{23}
   \DuCorep{V}_{34}^{*}\BrMultunit_{24}\DuCorep{V}_{34}
   =\Comp(\Hils)_{1}\Comp(\Hils[L])_{2} \pi(A)_{3}\DuCorep{V}^{*}_{34}B_{4}\DuCorep{V}_{34}.
  \] 
  Here we have used the leg numbering for \(\Cst\)\nb-algebras: \(\Comp(\Hils)_{1}= 
  \Comp(\Hils)\otimes 1_{\Hils[L]\otimes\Hils\otimes\Hils[L]}\), 
  \(\Comp(\Hils[L])_{2}= 1_{\Hils}\otimes \Comp(\Hils[L])\otimes 1_{\Hils\otimes\Hils[L]}\), 
  \(\pi(A)_{3}=1_{\Hils\otimes\Hils[L]}\otimes\pi(A)\otimes 1_{\Hils[L]}\), 
  and~\(B_{4}=1_{\Hils\otimes\Hils[L]\otimes\Hils}\otimes B\).
  
  Using \(\Comp(\Hils)_{1}\Comp(\Hils[L])_{2}\pi(A)_{3}\Multunit_{13}\Corep{U}_{23}
  =\Comp(\Hils)_{1}\Comp(\Hils[L])_{2}\pi(A)_{3}\) we simplify the 
  last equation 
  \[
    \DuCorep{V}^{*}_{34}B_{4}\DuCorep{V}_{34}\Comp(\Hils)_{1}
   \Comp(\Hils[L])_{2}\pi(A)_{3}
   \DuCorep{V}_{34}^{*}\BrMultunit_{24}\DuCorep{V}_{34}
   =\Comp(\Hils)_{1}\Comp(\Hils[L])_{2} \pi(A)_{3}\DuCorep{V}^{*}_{34}B_{4}\DuCorep{V}_{34}.
  \] 
  Now multiplying~\(\Comp(\Hils)\) to the third leg from the left 
  and using~\eqref{eq:Poldes_Ducoat} we obtain 
  \[
    \Comp(\Hils)_{1}\Comp(\Hils[L])_{2}\Comp(\Hils)_{3}\pi(A)_{3}B_{4}
    \DuCorep{V}_{34}^{*}\BrMultunit_{24}\DuCorep{V}_{34}
  =\Comp(\Hils)_{1}\Comp(\Hils[L])_{2} \Comp(\Hils)_{3}\pi(A)_{3}B_{4}.
  \]
  Furthermore, the nondegeneracy of~\(\pi\) implies
  \[
    \Comp(\Hils)_{1}\Comp(\Hils[L])_{2}\Comp(\Hils)_{3}B_{4}
    \DuCorep{V}_{34}^{*}\BrMultunit_{24}\DuCorep{V}_{34}
  =\Comp(\Hils)_{1}\Comp(\Hils[L])_{2}\Comp(\Hils)_{3}B_{4}.  
  \] 
  Observe that~\eqref{eq:F_V-invariant_1} is equivalent to 
  \(\DuCorep{V}^{*}_{34}\BrMultunit_{24}\DuCorep{V}_{34}
  =\Corep{V}_{23}\BrMultunit_{24}\Corep{V}_{23}^{*}\). This 
  implies 
  \[
    \Comp(\Hils)_{1}\Comp(\Hils[L])_{2}\Comp(\Hils)_{3}B_{4}
    \Corep{V}_{23}\BrMultunit_{24}\Corep{V}_{23}^{*}
  =\Comp(\Hils)_{1}\Comp(\Hils[L])_{2}\Comp(\Hils)_{3}B_{4},
  \] 
  and it is  equivalent to 
  \[
  \Comp(\Hils)_{1}\Comp(\Hils[L])_{2}\Comp(\Hils)_{3}B_{4}
    \Corep{V}_{23}\BrMultunit_{24}
  =\Comp(\Hils)_{1}\Comp(\Hils[L])_{2}\Comp(\Hils)_{3}B_{4}\Corep{V}_{23}.
  \]
  Now \(\Corep{V}_{23}\) commutes with~\(B_{4}\). Moreover, 
  \(\Comp(\Hils[L])_{2}\Comp(\Hils)_{3}\Corep{V}_{23}
  =\Comp(\Hils[L])_{2} \Comp(\Hils)_{3}\). 
    Subsequently, the last equation becomes 
  \[
    \Comp(\Hils)_{1}\Comp(\Hils[L])_{2}\Comp(\Hils)_{3}B_{4}
    \BrMultunit_{24}
  =\Comp(\Hils)_{1}\Comp(\Hils[L])_{2}\Comp(\Hils)_{3}B_{4}.    
  \]
  Taking the slices on the first and third legs by~\(\omega,\omega'\in\Bound(\Hils)_{*}\) 
  give \((\Comp(\Hils[L])\otimes B)\BrMultunit =\Comp(\Hils[L])\otimes B\). 
  This shows that \(\BrMultunit\) is a unitary right multiplier of 
  \(\Comp(\Hils[L])\otimes B\). Multiplying both sides of the above equation 
  by~\(\BrMultunit^{*}\) from the right gives
  \(\Comp(\Hils[L])\otimes B =(\Comp(\Hils[L])\otimes B)\BrMultunit^{*}\); 
  hence \(\BrMultunit\) is also a unitary left multiplier of~\(\Comp(\Hils[L])\otimes B\). 
           
  \textbf{Part (4):}~
%
  Using the definition \(\Comult[B]\) 
  and the braided pentagon equation~\eqref{eq:top-braided_pentagon}
  we verify~\eqref{eq:brd_chr_leg2}:
  \[
  (\Id_{\Hils[L]}\otimes\Comult[B])\BrMultunit
  =\BrMultunit_{23}\BrMultunit_{12}\BrMultunit_{23}^*
  =\BrMultunit_{12}\Braiding{\Hils[L]}{\Hils[L]}_{23}
  \BrMultunit_{12}\Dualbraiding{\Hils[L]}{\Hils[L]}_{23}.
  \]
  Since~\(\BrMultunit\in\U(\Comp(\Hils[L])\otimes B)\), the right 
  hand side of the last equation is in~\(\U(\Comp(\Hils[L])\otimes 
  B\boxtimes B)\). Hence, the image of \(\Comult[B]\) lies in 
  \(\Mult(B\boxtimes B)\). Furthermore, taking slices on the first leg of 
  the first equality gives~\(\Comult[B](b)=\BrMultunit(b\otimes 1_{\Hils[L]})\BrMultunit^*\)
  for all~\(b\in B\).  This shows that~\(\Comult[B]\) is the unique \Star{}homomorphism 
  satisfying~\eqref{eq:brd_chr_leg2}.

  Next we recall~\((\Comp(\Hils[L])\otimes B)\BrMultunit =\Comp(\Hils[L])\otimes B\) and 
  use it in the following computation
  \begin{align*}
    (\Comp(\Hils[L])\otimes j_1(B))(\Id_{\Hils[L]}\otimes\Comult[B])\BrMultunit
    &=(\Comp(\Hils[L])\otimes j_1(B))\Bigl((\Id_{\Hils[L]}\otimes j_1)\BrMultunit (\Id_{\Hils[L]}\otimes j_2)\BrMultunit\Bigr) \\
    &= \Bigl((\Id_{\Hils[L]}\otimes j_1)\bigl((\Comp(\Hils[L])\otimes B)\BrMultunit\bigr)\Bigr)(\Id_{\Hils[L]}\otimes j_2)\BrMultunit \\
    &= (\Comp(\Hils[L])\otimes j_1(B))(\Id_{\Hils[L]}\otimes j_2)\BrMultunit.
  \end{align*}
  
  Slicing the first leg by \(\omega\in\Bound(\Hils[L])_*\) on both
  sides give \(j_1(B) \Comult[B](B) = j_1(B) j_2(B) = B\boxtimes
  B\).  A similar computation yields that \(\Comult[B](B)j_2(B) =
  B\boxtimes B\). Consequently, 
  \[
  \Comult[B](B)j_{2}(B)j_{1}(B) 
  =(B\boxtimes B)j_{1}(B)=j_{2}(B)j_{1}(B)j_{1}(B)=B\boxtimes B
  \] 
  shows that~\(\Comult[B]\) is nondegenerate.

  Once again, the braided pentagon equation~\eqref{eq:top-braided_pentagon} yields 
  coassociativity of~\(\Comult[B]\):
  \begin{align*}
    (\Comult[B]\boxtimes\Id_B)\Comult[B](b)
    =\BrMultunit_{12}\Braiding{\Hils[L]}{\Hils[L]}_{23}\Comult[B](b)_{12}
    \Dualbraiding{\Hils[L]}{\Hils[L]}_{23}\BrMultunit_{12}^*
    &= \BrMultunit_{12}\Braiding{\Hils[L]}{\Hils[L]}_{23}\BrMultunit_{12}
    b_1
    \BrMultunit_{12}^*\Dualbraiding{\Hils[L]}{\Hils[L]}_{23}\BrMultunit_{12}^*\\
    &=\BrMultunit_{23}\BrMultunit_{12}b_1\BrMultunit_{12}^*\BrMultunit_{23}\\
    &= (\Id_B\boxtimes\Comult[B])\Comult[B](b).
  \end{align*}

  Recall the diagonal action \(\beta\bowtie\beta\) of~\(\G\) on \(B\boxtimes B\)
  is described by~\eqref{eq:diag_G}:
  \[
  \beta\bowtie\beta\colon B\boxtimes B\to B\boxtimes B\otimes A,
  \qquad x\mapsto
  \corep{U}_{13}\corep{U}_{23}(x\otimes 1_A) \corep{U}^*_{23}
  \corep{U}^*_{13}.
  \]

  The invariance~\eqref{eq:F_U-invariant}
  of~\(\BrMultunit\) gives
  \begin{align*}
    \beta\bowtie\beta \Comult[B](b)
    &=\Corep{U}_{13}\Corep{U}_{23}\BrMultunit_{12}
    (b\otimes 1_{\Hils[L]\otimes\Hils})
    \BrMultunit^*_{12}\Corep{U}^*_{23}\Corep{U}^*_{13}\\
    &=\BrMultunit_{12}\Corep{U}_{13}\Corep{U}_{23}
    (b\otimes 1_{\Hils[L]\otimes\Hils})
    \Corep{U}^*_{23}\Corep{U}^*_{13}\BrMultunit^*_{12}
    = (\Comult[B]\otimes\Id_A)\beta(b);
  \end{align*}
  hence~\(\Comult[B]\) is \(\G\)\nb-equivariant.  Similarly, we can
  show that~\(\Comult[B]\) is \(\DuG\)\nb-equivariant.
 \end{proof}
 
\begin{definition}
 \label{def:br_Qntgrp}
 A \emph{braided \(\Cst\)\nb-quantum group} over~\(\G\) is a pair 
 \((B,\Comult[B])\) consisting of an object \((B,\beta,\hat{\beta})\) 
 and a morphism \(\Comult[B]\in\Mor(B,B\boxtimes B)\) in the 
 category~\(\YDcat(\G)\) constructed out of a manageable braided multiplicative 
 unitary~\(\BrMultunit\) over~\(\G\) described in the way as in 
 Theorem~\ref{the:braid_Qgrp}. Then we say~\(\Bialg{B}\) is \emph{generated} 
 by~\(\BrMultunit\).
\end{definition}  
Two braided \(\Cst\)\nb-quantum groups~\(\Bialg{B}\) and~\(\Bialg{B'}\) over~\(\G\) 
are \emph{isomorphic} if there is an isomorphism~\(f\in\Mor(B,B')\) in the category \(\YDcat(\G)\) such that~\((f\boxtimes f)\circ\Comult[B]=\Comult[B']\circ f\).
  
\subsection{Duals of braided $\textup{C}^{*}$-quantum groups}
Now we construct the reduced dual of~\(\Bialg{B}\) as a braided \(\Cst\)\nb-quantum 
group and prove the Pontrjagin duality theorem for braided quantum groups.

By~\cite{MRW2017}*{Definition 3.3, Proposition 3.4 \& 3.6}, 
the dual of a manageable braided multiplicative unitary~\(\BrMultunit\in\U(\Hils[L]\otimes\Hils[L])\) 
over \(\G\) with respect to~\((\corep{U},\corep{V})\), defined by 
\(\DuBrMultunit\defeq\Dualbraiding{\Hils[L]}{\Hils[L]}\BrMultunit^{*} 
\Braiding{\Hils[L]}{\Hils[L]}\in\U(\Hils[L]\otimes\Hils[L])\) is again 
a manageable braided multiplicative unitary over~\(\DuG\) with respect 
to~\((\corep{V},\corep{U})\). Also, \(\DuG\) is regular if and only if~\(\G\) 
is regular.

\begin{corollary}
 \label{cor:brd_dual}
 \(\DuBrMultunit\) generates a braided \(\Cst\)\nb-quantum group~\(\DuBialg{B}\) over \(\DuG\).
\end{corollary}
More precisely, 
\[
  \hat{B}\defeq \{(\omega\otimes\Id_{\Hils[L]})\DuBrMultunit\mid \omega\in\Bound(\Hils[L])_{*}\}^\CLS, 
 \quad 
 \DuComult[B](\hat{b})\defeq \DuBrMultunit (\hat{b}\otimes 1_{\Hils[L]}) 
 \DuBrMultunit^{*} 
 \quad
 \text{for all~\(\hat{b}\in\hat{B}\).}
\]
By construction \(\hat{B}\) is a~\(\DuG\)\nb-Yetter\nb-Drinfeld 
\(\Cst\)\nb-algebra with respect to the actions 
\(\hat{\delta}\colon\hat{B}\to\hat{B}\otimes\hat{A}\) 
 and~\(\delta\colon\hat{B}\to\hat{B}\otimes A\) defined 
 by
\begin{equation}
  \label{eq:YD_hat-B}
   \hat{\delta}(\hat{b})\defeq\corep{V}(\hat{b}\otimes 1_{\hat{A}})\corep{V}^{*}, 
   \qquad
   \delta(\hat{b})\defeq \corep{U}(\hat{b}\otimes 1_{A})\corep{U}^{*},
   \qquad\text{for all~\(\hat{b}\in\hat{B}\).} 
\end{equation}
In particular, a variant of Theorem~\ref{the:Yetter-Drinfeld_cat} shows that the 
monoidal product~\(B\widehat{\boxtimes}B\) in~\(\YDcat(\DuG)\) is defined by 
\(\hat{B}\widehat{\boxtimes}\hat{B}\defeq\iota_{1}(\hat{B})\iota_{2}(\hat{B})
\subset\Bound(\Hils[L]\otimes\Hils[L])\) where~\(\iota_{1},\iota_{2}\) are faithful representations 
of~\(\hat{B}\) on~\(\Hils[L]\otimes\Hils[L]\)
defined by~\(\iota_{1}(\hat{b})\defeq \hat{b}\otimes 1_{\Hils[L]}\) and \(\iota_{2}(\hat{b})\defeq 
\Dualbraiding{\Hils[L]}{\Hils[L]}(\hat{b}\otimes 1_{\Hils[L]})\Braiding{\Hils[L]}{\Hils[L]}\) for all \(\hat{b}\in\hat{B}\). Consequently, \(\DuComult[B]\colon \hat{B}\to \hat{B}\widehat{\boxtimes}\hat{B}\) is an arrow in~\(\YDcat(\DuG)\).
\begin{definition}
 The braided \(\Cst\)\nb-quantum group \(\Bialg{B}\) is said to be the 
 \emph{\textup{(}reduced\textup{)} dual} of~\(\Bialg{B}\).
\end{definition}
Once again,~\cite{MRW2017}*{Definition 3.3, Proposition 3.4 \& 3.6} imply the 
dual of~\(\DuBrMultunit\) is~\(\BrMultunit\). Consequently, we obtain the braided analogue of the \emph{Pontrjagin duality} theorem:
\begin{corollary}
 \label{cor:pontr_dual}
The  dual of~\(\DuBialg{B}\) is \textup{(}canonically\textup{)} isomorphic to \(\Bialg{B}\) as a braided 
\(\Cst\)\nb-quantum group over~\(\G\). 
\end{corollary}

\subsection{The bosonization}
 \label{sec:boson}
The reconstruction of the ordinary \(\Cst\)\nb-quantum group~\(\Qgrp{H}{C}\) and a projection with image~\(\G\) starting from a braided \(\Cst\)\nb-quantum group \(\Bialg{B}\) over~\(\G\) is called as \emph{bosonization}. In the compact case, that is, when~\(A\) and~\(B\) are unital, this has been already done in~\cite{MRW2016}*{Theorem 6.4}. We extend this result for general \(\Cst\)\nb-quantum groups, essentially, using the same ingredients.
According to Theorem~\ref{the:Yetter-Drinfeld_cat} \(\YDcat(\G)\) is a monoidal category and \((B,\beta,\hat{\beta})\) is an object of the category \(\YDcat(\G)\). Also, regularity of~\(\G\) makes 
\(A\) an object of~\(\YDcat(\G)\) as well, see Example~\ref{ex:yett-drinf_reg}. Then 
\(A\boxtimes B\defeq (A\otimes 1_{\Hils[L]})\DuCorep{V}^{*} (1_{\Hils}\otimes B)\DuCorep{V}\) as shown in~\cite{MRW2016}*{Page 19}. Here we have suppressed the faithful representations of~\(A\) and \(B\) on~\(\Hils\) and~\(\Hils[L]\), respectively. 
In fact, \(B\ni b\mapsto \DuCorep{V}^{*} (1_{\Hils}\otimes B)\DuCorep{V} 
\in \hat{A}\otimes B\) defines a left action of the co-opposite quantum group  
\(\DuG^{\textup{cop}}:=(\hat{A}, \flip\circ\DuComult[A])\) of~\(\DuG\) and \(A\boxtimes B =\DuG^{\textup{cop}}\ltimes B \textup{ (}\cong B\rtimes_{\hat{\beta}}\DuG\textup{)}\). 

By virtue of~\cite{MRW2016}*{Proposition 6.3} we get 
an injective morphism 
\(\Psi\colon A\boxtimes B\boxtimes B\to A\boxtimes B\otimes A\boxtimes B\)
defined by
\begin{equation}
 \label{eq:Psi_twised}
 A\boxtimes B\boxtimes B\ni x\mapsto \Multunit_{12}\Corep{U}_{23}\DuCorep{V}_{34}^{*}x_{124}\DuCorep{V}_{34}
 \Corep{U}_{23}^{*}\Multunit[*]_{12}.
\end{equation}

\begin{proposition}
 \label{prop:big_qntgrp}
 Let~\(C=A\boxtimes B\). Define~\(\Comult[C]\in\Mor(C, C\otimes C )\) 
 by~\(\Comult[C]\defeq\Psi\circ(\Id_{B}\boxtimes\Comult[B])\). Then 
 \(\Qgrp{H}{C}\) is the \(\Cst\)\nb-quantum group generated by \(\Multunit[C]\) in~\textup{\eqref{eq:big_MU}}. Moreover, \(\G[H]\) is a \(\Cst\)\nb-quantum group with a projection and~\(\G\) becomes image of the projection.
\end{proposition}
\begin{proof}
 Let~\(L=\{(\omega\otimes\omega'\otimes\Id_{\Hils\otimes\Hils[L]})\Multunit[C] 
   \mid\text{ \(\omega\in\Bound(\Hils)_{*}\), \(\omega'\in\Bound(\Hils[L])_{*}\)}\}^\CLS\). 
 
 Using~\eqref{eq:first_leg_slice} we get
 \begin{align*}
   L &=\{(\omega\otimes\omega'\otimes\Id_{\Hils\otimes\Hils[L]})
   \Multunit_{13}\Corep{U}_{23}\DuCorep{V}^*_{34}\BrMultunit_{24}\DuCorep{V}_{34} 
   \mid\text{ \(\omega\in\Bound(\Hils)_{*}\), \(\omega'\in\Bound(\Hils[L])_{*}\)}\}^\CLS \\
   &=\{(\omega'\otimes\Id_{\Hils\otimes\Hils[L]})
   ((1\otimes a\otimes 1)\Corep{U}_{12}\DuCorep{V}^*_{23}\BrMultunit_{13}\DuCorep{V}_{23})
   \mid\text{\(\omega'\in\Bound(\Hils[L])_{*}\), \(a\in A\)}\}^\CLS
 \end{align*}
 For~\(\omega'\in\Bound(\Hils[L])_{*}\) and~\(\xi\in\Comp(\Hils[L])\) 
 define \(\omega'\cdot\xi\in\Bound(\Hils[L])_{*}\) by 
 \(\omega'\cdot\xi(y)\defeq \omega'(\xi y)\). 
 
 Replacing~\(\omega'\) by~\(\omega'\cdot \xi\) 
 in the last expression we get 
 \[
   L=\{(\omega'\otimes\Id_{\Hils\otimes\Hils[L]})
   (((\xi\otimes a)\Corep{U}\otimes 1_{\Hils[L]})
   \DuCorep{V}^*_{23}\BrMultunit_{13}\DuCorep{V}_{23})
   \mid\text{\(\omega'\in\Bound(\Hils[L])_{*}\), \(\xi\in\Comp(\Hils[L])\), \(a\in A\)}\}^\CLS
 \] 
 We may also replace \((\xi\otimes a)\Corep{U}\) by 
 \(\xi\otimes a\) for~\(\xi\in\Comp(\Hils[L])\), 
 \(a\in A\), because 
 \(\corep{U}\in\U(\Comp(\Hils[L])\otimes A)\) and 
 \(\Corep{U}=(\Id_{\Hils[L]}\otimes\pi)\corep{U}\). We have 
 \begin{align*}
   L &=\{(\omega'\otimes\Id_{\Hils\otimes\Hils[L]})
   ((\xi\otimes a\otimes 1_{\Hils[L]})
   \DuCorep{V}^*_{23}\BrMultunit_{13}\DuCorep{V}_{23} )
   \mid\text{\(\omega'\in\Bound(\Hils[L])_{*}\), \(\xi\in\Comp(\Hils[L])\), \(a\in A\)}\}^\CLS \\
  &= \{(\omega'\otimes\Id_{\Hils\otimes\Hils[L]})
   ((1_{\Hils[L]}\otimes a\otimes 1_{\Hils[L]})
   \DuCorep{V}^*_{23}\BrMultunit_{13}\DuCorep{V}_{23}) 
   \mid\text{\(\omega'\in\Bound(\Hils[L])_{*}\), \(a\in A\)}\}^\CLS .
 \end{align*}
 Finally using~\eqref{eq:slice_first_brmult} we obtain
 \begin{align*}
  L &=\{(\omega'\otimes\Id_{\Hils\otimes\Hils[L]})
   ((1\otimes a\otimes 1)\DuCorep{V}^*_{23}\BrMultunit_{13}\DuCorep{V}_{23} )
   \mid\text{\(\xi\in\Comp(\Hils[L])\), \(a\in A\), \(\omega'\in\Bound(\Hils[L])_{*}\)}\}^\CLS  \\
 &=(A\otimes 1_{\Hils[L]})\DuCorep{V}^{*}(1_{\Hils}\otimes B)\DuCorep{V}=C
 \end{align*}
 
 Now for any~\(c\in C=A\boxtimes B\subset\Bound(\Hils\otimes\Hils[L])\) 
 \begin{multline*}
   \Comult[C](c)=\Psi\bigl((\Id_{B}\boxtimes\Comult[B])(c)\bigr) 
   =\Psi(\BrMultunit_{23}(c\otimes 1_{\Hils[L]})\BrMultunit_{23}^{*})
   \\=\Multunit_{12}\Corep{U}_{23}\DuCorep{V}_{34}^{*}\BrMultunit_{24} 
     (c\otimes 1_{\Hils\otimes\Hils[L]})
     \BrMultunit_{24}^{*}\Ducorep{V}_{34}\Corep{U}_{23}^{*}\Multunit[*]_{12}
     =(\Multunit[C])(c\otimes 1)(\Multunit[C])^{*}.
 \end{multline*}
 Theorem~\ref{the:Cst_quantum_grp_and_mult_unit} shows that \(\Comult[C]\in\Mor(C, C\otimes C)\) is the unique element satisfying~\((\Id_{\hat{C}}\otimes\Comult[C])\Multunit=\Multunit_{12}\Multunit_{13}\). Thus 
 \(\Bialg{C}\) is the \(\Cst\)\nb-quantum group generated by~\(\Multunit[C]\).
The unitary \(\ProjBichar\) in~\eqref{eq:Proj_Unit} is a projection on~\(\G[H]\) 
with image \(\Qgrp{G}{A}\), see Lemma~\ref{lemm:rep_hat-A}.
\end{proof}
Suppose~\(\multunit[C]\in\U(\hat{C}\otimes C)\) is the reduced bicharcater of~\(\Qgrp{H}{C}\), \(\projbichar\in\U(\hat{C}\otimes C)\) is the projection on~\(\G[H]\), and the image of~\(\projbichar\) is the regular~\(\Cst\)\nb-quantum group~\(\Qgrp{G}{A}\). Then we can construct a manageable braided multiplicative unitary~\(\BrMultunit\) over~\(\G\) described in the way as in~\cite{MRW2017}*{Theorem 3.9}. Suppose, \(\BrMultunit\) gives rise to the braided \(\Cst\)\nb-quantum group~\(\Bialg{B}\) over~\(\G\) and~\(\G[H]_{1}=\Bialg{C_{1}}\) is the associated  bosonization with projection~\(\projbichar_{1}\in\U(\hat{C_{1}}\otimes C_{1})\). As a consequence of \cite{MRW2017}*{Theorem 3.10} and~\cite{MR2019}*{Theorem 2.18}, there is a Hopf~\Star{}isomorphism~\(f\in \Mor(C,C_{1})\) such that~\(\projbichar_{1}=(\hat{f}\otimes f)\projbichar\). Hence,~\((\G[H],\projbichar)\) is isomorphic to~\((\G[H]_{1},\projbichar_{1})\).

Hence, starting with a~\(\Cst\)\nb-quantum group~\(\G[H]\) with a projection~\(\projbichar\) whose image is a regular~\(\Cst\)\nb-quantum group~\(\G\) we can construct a braided \(\Cst\)\nb-quantum group~\(\Bialg{B}\) over~\(\G\) and reconstruct~\(\G[H]\) as the bosonization of~\(\Bialg{B}\) and the projection~\(\projbichar\) on~\(\G[H]\), up to isomorphism. 

Next we show that the construction~\((\G[H],\projbichar)\to\Bialg{B}\) respects the isomorphisms. For that matter, let us recall the Drinfeld's double~\(\mathfrak{D}(\G[H]_{i})=\Bialg{\mathcal{D}_{i}}\) of~\(\G[H]_{i}\) from~\cite{R2015a}*{Example 5.12} for~\(i=1,2\). The embeddings~\(\rho_{i},\theta_{i}\colon C_{i},\hat{C_{i}}\rightrightarrows \mathcal{D}_{i}\) are Hopf~\Star{}homomorphisms. Consider the faithful representation~\(\pi_{i}\in\Mor(\mathcal{D}_{i},\Comp(\Hils_{i}))\) for~\(i=1,2\). Define~\(\corep{U}_{i}\defeq(\pi_{i}\circ\theta_{i}\otimes\Id_{C})\projbichar_{i}\in\U(\Comp(\Hils_{i})\otimes C_{i})\), \(\corep{V}_{i}\defeq(\pi_{i}\circ\rho_{i}\otimes\Id_{\hat{C_{i}}})\widehat{\projbichar}\in\U(\Comp(\Hils_{i})\otimes\hat{C})\) and~\(\BrMultunit_{i}\defeq (\pi_{i}\circ\theta_{i}\otimes\pi_{i}\circ\rho_{i})\projbichar_{i}^{*}\multunit[C_{i}]\in\U(\Hils_{i}\otimes\Hils_{i})\) for~\(i=1,2\). Then \cite{MRW2017}*{Theorem 3.10} says that \(\BrMultunit_{i}\) is a manageable braided multiplicative unitary over~\(\G_{i}\) relative to~\((\corep{U}_{i},\corep{V}_{i})\) for~\(i=1,2\). Let~\(\G_{i}=\Bialg{A_{i}}\) be the image of~\(\projbichar_{i}\) and for~\(i=1,2\). Assume~\(\G_{1}\) and~\(\G_{2}\) are regular~\(\Cst\)\nb-quantum groups. Then we construct 
the braided \(\Cst\)\nb-quantum group~\(\Bialg{B_{i}}\) from \(\BrMultunit_{i}\) for~\(i=1,2\) in the way as in Theorem~\ref{the:braid_Qgrp}.

Suppose~\(f\in\Mor(C_{1},C_{2})\) defines an isomorphism between~\((\G[H]_{1},\projbichar_{1})\) and~\((\G[H]_{2},\projbichar_{2})\). Consider the dual Hopf~\Star{}isomorphism~\(\hat{f}\in\Mor(\hat{C}_{1},\hat{C}_{2})\). 
Then~\((\hat{f}\otimes f)\projbichar_{1}=\projbichar_{2}\) by \eqref{eq:projiso-rel}. Then \(f\) induces a Hopf \Star{}isomorphism \(h\colon \pi_{1}(D_{1})\cong D_{1}\to D_{2}\cong\pi_{2}(D_{2})\) such that~\(h\circ \pi_{1}\circ\rho_{1}=\pi_{2}\circ\rho_{2}\circ f\) and~\(h\circ\pi_{1}\circ \theta_{1}=\pi_{2}\circ \theta_{2}\circ \hat{f}\). So,~\((h\otimes f)\corep{U}_{1}=\corep{U}_{2}\), \((h\otimes\hat{f})\corep{V}_{1}=\corep{V}_{2}\) and~\((h\otimes h)\BrMultunit_{1}=\BrMultunit_{2}\). Then~\(B_{i}=\{(\omega\otimes\Id)\BrMultunit_{i}\mid \omega\in\Bound(\Hils_i)_{*}\}^\CLS\) for~\(i=1,2\). Also, the Landstad\nb-Vaes algebras in Proposition~\ref{prop:Landstad_slices} associated to~\((\G[H]_{1},\projbichar_{1})\) and~\((\G[H]_{2},\projbichar_{2})\), respectively are isomorphic. Consequently, the restriction~\(h_{B}\) of~\(h\) on~\(B_{1}\) defines an isomorphism between \(B_{1}\) and \(B_{2}\). Since, \(\G_{1}\) and~\(\G_{2}\) are isomorphic \(\Cst\)\nb-quantum groups, we may identify~\(A_{2}\) with~\(f(A_{1})\). Let~\(\G=\G_{1}=\G_{2}\). Now, the~\(\G\)\nb-action on~\(B_{i}\) is given by~\(B_{i}\ni b_{i}\to\corep{U}_{i}(b_{i}\otimes 1_{A})\corep{U}_{i}^{*}\) for~\(i=1,2\). Then~\(h_{B}\) is~\(\G\)\nb-equivariant. Similarly, the~\(\DuG\)\nb-action on~\(B_{i}\) is implemented by~\(\corep{V}_{i}\) for~\(i=1,2\); hence~\(h_{B}\) is also~\(\DuG\)\nb-equivariant. Therefore, \(h_{B}\in\Mor(B_{1},B_{2})\) is an isomorphism in the category~\(\YDcat(\G)\). Denote the embeddings~\(j_{1,i},j_{2,i}\colon B_{i}\rightrightarrows 
B_{i}\boxtimes B_{i}\) for~\(i=1,2\). Following~\eqref{eq:brd_chr_leg2} we characterise~\(\Comult[B_{1}]\) and~\(\Comult[B_{2}]\) as follows
\begin{align*}
  (\Id_{\Hils_{1}}\otimes\Comult[B_{1}])\BrMultunit_{1} 
  &= (\Id_{\Hils_{1}}\otimes j_{1,1})\BrMultunit_{1} (\Id_{\Hils_{1}}\otimes j_{2,1})\BrMultunit_{1}, \\
  (\Id_{\Hils_{2}}\otimes\Comult[B_{2}])\BrMultunit_{2} 
  &= (\Id_{\Hils_{2}}\otimes j_{1,2})\BrMultunit_{2} (\Id_{\Hils_{2}}\otimes j_{2,2})\BrMultunit_{2}.
\end{align*}
Then~\(h\otimes (h|_{B}\boxtimes h|_{B})\) maps the first equation to the second equation; hence \((h|_{B}\boxtimes h|_{B})\circ\Comult[B_{1}]=\Comult[B_{2}]\circ h_{B}\). Hence, \(h_{B}\) defines an isomorphism of braided~\(\Cst\)\nb-quantum groups between~\(\Bialg{B_{1}}\) and~\(\Bialg{B_{2}}\).

On the other hand, let~\(\Bialg{B}\) be a braided~\(\Cst\)\nb-quantum group over a regular~\(\Cst\)\nb-quantum group~\(\G\). Suppose~\(\Qgrp{H}{C}\) is the bosonization of~\(\Bialg{B}\) and \(\projbichar\) is the projection on~\(\G[H]\) as constructed in the way as in Proposition~\ref{prop:big_qntgrp}. As before, we construct a braided \(\Cst\)\nb-quantum group~\(\Bialg{B_{1}}\) be the over~\(\G\) from~\((\G[H],\projbichar)\) and its bosonization~\(\G[H]_{1}=\Bialg{C_{1}}\) along with the projection~\(\projbichar_{1}\) on~\(\G[H]_{1}\). Then~\((\G[H],\projbichar)\) and~\((\G[H]_{1},\projbichar_{1})\) are isomorphic \(\Cst\)\nb-quantum groups with projection. Consequently, \(\Bialg{B}\) and~\(\Bialg{B_{1}}\) are isomorphic braided~\(\Cst\)\nb-quantum groups.

Finally, we are going to show that the construction \(\Bialg{B}\to (\G[H],\projbichar)\) respects the isomorphisms. Suppose~\(\Bialg{B_{1}}\) and~\(\Bialg{B_{1}}\) are isomorphic braided~\(\Cst\)\nb-quantum groups over a regular \(\Cst\)\nb-quantum group~\(\Qgrp{G}{A}\). Let~\(f\in\Mor(B_{1},B_{2})\) be the isomorphism in the category~\(\YDcat(\G)\). This extends to an isomorphism~\(h\defeq\Id_{A}\boxtimes f\in\Mor(A\boxtimes B_{1}, A\boxtimes B_{2})\) such that~\(h\circ i_{A}^{1}=i_{A}^{2}\) and~\(h\circ i_{B_{1}}=i_{B_{2}}\circ f\). Here \(i_{A}^{k},i_{B_{k}}\colon A,B_{k}\rightrightarrows C_{k}=A\boxtimes B_{k}\) are the canonical morphisms for~\(k=1,2\). In order to keep track of the copy of~\(A\) inside~\(C_{k}\) we use different notations~\(i_{A}^{1}, i_{A}^{2}\) for their embeddings, whereas~\(i_{A}^{1}=i_{A}^{2}\).

Suppose~\(\G[H]_{k}=\Bialg{C_{k}}\) is the~\(\Cst\)\nb-quantum group with the projection~\(\projbichar_{k}\) given by Proposition~\ref{prop:big_qntgrp} for~\(k=1,2\). Then the images of~\(\projbichar_{1}\) and~\(\projbichar_{2}\) are isomorphic to the regular \(\Cst\)\nb-quantum group~\(\G\). Also, \(B_{k}\) is identified with the Landstad-Vaes algebra~\(i_{B_{k}}(B_{k})\subset\Mult(C)\) for the \(\G\)\nb-product~\((C_{k},\Delta^{k}_{L},i_{A}^{k})\) (\(\Delta^{k}_{L}\) is the left action of~\(\G\) on~\(C_{k}\)) induced by the projection~\(\projbichar_{k}\) on~\(\G[H]_{k}\) in Proposition~\ref{prop:Landstad_slices} for~\(k=1,2\). 

Recall the injective morphism~\(\Psi_{k}\colon A\boxtimes B_{k}\boxtimes B_{k}\to C_{k}\otimes C_{k}\) constructed in~\cite{MRW2016}*{Proposition 6.3} for~\(k=1,2\). On the embeddings~\(j_{1}^{k}, j_{2}^{k}, j_{3}^{k}\) they are defined by 
\begin{align*}
  \Psi_{k} j_{1}^{k}(a)
   =(i_{A}^{k}\otimes i_{A}^{k})\Comult[A](a), & \qquad
  \Psi_{k} j_{2}^{k}(b_{k})
  =(i_{B_{k}}\otimes i_{A}^{k})\beta^{k}(b_{k}),\\
  \Psi_{k} j_{3}^{k}(b_{k})
  =1_{C_{k}}\otimes i_{B_{k}}(b_{k}), 
  &\qquad\text{for~\(a\in A\), \(b_{k}\in B_{k}\) for~\(k=1,2\).}
\end{align*}
Here~\(\beta^{k}\in\Mor(B_{k},B_{k}\otimes A)\) is the~\(\G\)\nb-action on~\(B_{k}\) for~\(k=1,2\). Now~\(\Comult[C_{k}]=\Psi_{k}\circ(\Id_{A}\boxtimes\Comult[B_{k}])\) for~\(k=1,2\).

Clearly, 
\begin{align*}
  (h\otimes h)\circ\Comult[C_{1}]\circ i_{A}^{1}=(h\circ i^{1}_A\otimes h\circ i^{1}_{A})\circ\Comult[A]=(i^{2}_{A}\otimes i^{2}_{A})\circ\Comult[A]
&=\Comult[C_{2}]\circ i^{2}_{A}\\
&=\Comult[C_{2}]\circ h\circ i^{1}_{A}.
\end{align*}
So the restriction~\(h|_{A}\) is a Hopf~\Star{}isomorphism. 
Using the fact that~\(f\) is~\(\G\)\nb-equivariant and~\(h\circ i_{B_{1}}=i_{B_{2}}\circ f\) we verify \((h\otimes h)\circ \Psi|_{A, B_{1}}\circ j^{1}_{l}=\Psi_{A,B_{2}}\circ j^{2}_{l}\circ f\) for~\(l=2,3\). This yields
\begin{align*}
    (h\otimes h)\circ\Comult[C_1]\circ i_{B_{1}}
    &=(h\otimes h)\Psi_{A,B_{1}}(\Id_{A}\boxtimes\Comult[B_{1}])\circ i_{B_{1}}\\
    &=\Psi_{A,B_{2}}\circ (\Id_{A}\boxtimes (f\boxtimes f)\circ\Comult[B_{1}])\circ i_{B_{1}}\\
   &=\Psi_{A,B_{2}}\circ (\Id_{A}\boxtimes\Comult[B_{1}])\circ(\Id_{A}\boxtimes f)\circ i_{B_{1}}\\
   &=\Comult[C_{2}]\circ h\circ i_{B_{1}}
     =\Comult[C_{2}]\circ h\circ i_{B_{1}}.
\end{align*}
Therefore, \(h\) defines an isomorphism between~\(\G[H]_{1}\) and~\(\G[H]_{2}\). Hence, \((\G[H]_{1},\projbichar_{1})\) and~\((\G[H]_{2},\projbichar_{2})\) are isomorphic~\(\Cst\)\nb-quantum groups with projection. Summarising, we have the following result.
\begin{theorem}
 \label{the:brdq_qgpp}
 Isomorphism classes of braided \(\Cst\)\nb-quantum groups over a regular \(\Cst\)\nb-quantum group~\(\G\) are in one to one correspondence with the isomorphism classes of \(\Cst\)\nb-quantum groups with a projection generating~\(\G\) as its image.
\end{theorem}

\section{Examples}
 \label{sec:Examples}
\subsection{C*-quantum groups with an idempotent Hopf~$^*$-homomorphism}

Suppose,~\(f\in\Mor(C,C)\) is an idempotent Hopf~\Star{}homomorphism on a \(\Cst\)\nb-quantum group~\(\Qgrp{H}{C}\). Let~\(\multunit[C]\in\U(\hat{C}\otimes C)\) be the reduced bicharacter of~\(\G[H]\). The unitary \(\projbichar\defeq (\Id_{\hat{C}}\otimes f)\multunit[C]\in\U(\hat{C}\otimes C)\) is the unique bicharacter corresponding to~\(f\). Since~\(f\) is idempotent, by~\cite{MRW2012}*{Definition 3.5} 
\(\projbichar\) also satisfies~\eqref{eq:proj_cond}; hence~\(\projbichar\) is a projection on~\(\G[H]\). 

Clearly, \(A\defeq\textup{Im}(f)=\{(\omega\otimes\Id_{C})\projbichar 
\mid \omega\in\hat{C}^{*}\}^\CLS\) and~\(\Comult[A]\defeq\Comult[C]|_{A}\in\Mor(A,A\otimes A)\) satisfy~\((\Id_{\hat{C}}\otimes\Comult[A])\projbichar 
=\projbichar_{12}\projbichar_{13}\). So, \(\Qgrp{H}{C}\) is a quantum group with 
projection~\(\projbichar\) with image \(\Qgrp{G}{A}\). Theorem~\ref{the:brdq_qgpp} says that there exists a unique braided \(\Cst\)\nb-quantum group~\(\Bialg{B}\) over~\(\G\) and~\(\G[H]\) is the associated bosonization. 

Quantum~\(\textup{E}(2)\) groups~\cite{W1992a}, quantum~\(az+b\) groups~\cites{W2001, S2005} and quantum~\(ax+b\) groups \cite{WZ2002} are examples of \(\Cst\)\nb-quantum groups with an idempotent Hopf $^*$-homomorphism generating the multiplicative subgroups~\(\T, q^{\Z+i\R}\) (for a suitably chosen deformation parameter \(q\in\C\setminus\{0\}\)), and \(\R_{>0}^{\times}\) of~\(\C\setminus\{0\}\) as their images, respectively. For more details we refer~\cite{R2013}*{Section 6.2.1}, \cite{MRW2016}*{Section 4} and~\cite{KS2015}*{Example 3.7}. 

\subsection{Braided compact quantum groups}
 Suppose~\(\Qgrp{G}{A}\) is a compact quantum group. 
 By~\cite{MRW2016}*{Definition 6.1}, 
 a \emph{braided compact quantum group} over~\(\G\) 
 a pair~\(\Bialg{B}\) consisting 
 of a unital~\(\G\)\nb-Yetter-Drinfeld \(\Cst\)\nb-algebra 
 \((B,\beta,\hat{\beta})\) and a unital \Star{} 
 homomorphism~\(\Comult[B]\colon B\to B\boxtimes B\) satisfy~\eqref{eq:br_coasso} and~\eqref{eq:br_cancel}. 

\begin{proposition}
  Every braided compact quantum group over~\(\G\) 
  is a braided \(\Cst\)\nb-quantum group with 
  the underlying \(\Cst\)\nb-algebra being unital.
\end{proposition}
\begin{proof}
Let~\(\Bialg{B}\) be a braided compact quantum group over~\(\G\). 
Suppose, \(\Qgrp{H}{C}\) is the bosonization, which is a compact quantum group, of~\(\Bialg{B}\) as in~\cite{MRW2016}*{Theorem 6.4}. Let~\(h\) be the Haar state 
of~\(\G[H]\) and let~\(\Hils_{h}\) be the GNS space. Then the right regular 
representation~\(\Multunit[C]\in\U(\Hils_{h}\otimes\Hils_{h})\) of~\(\G[H]\) 
on~\(\Hils_{h}\) is a manageable multiplicative unitary and generates~\(\Qgrp{H}{C}\). 

Moreover, there is a projection on~\(\G[H]\) consisting of the canonical embedding 
 \(i_{A}\colon A\hookrightarrow A\boxtimes B=C\) and the left quantum group 
 homomorphism \(\Delta_{L}\in\Mor(C,A\otimes C)\) given by~\cite{MRW2017}*{Proposition 2.8 \& 2.10}. Let~\(\projbichar\in
 \U(\hat{C}\otimes C)\) be the projection equivalent to~\((i_{A},\Delta_{L})\). 
Following \cite{MRW2017}*{Theorem 3.9} we may construct a manageable 
 braided multiplicative unitary~\(\BrMultunit\in\U(\overline{\Hils_{h}}\otimes\Hils_{h}  \otimes\overline{\Hils_{h}}\otimes\Hils_{h})\) over~\(\G\). Subsequently, the braided 
 \(\Cst\)\nb-quantum group generated by~\(\BrMultunit\) is isomorphic to 
 \(\Bialg{B}\).
\end{proof}
 
\subsection{The complex quantum planes and their bosonizations}
\label{sec:qnt_E_2}
Throughout this section, we shall consider \(\Qgrp{G}{\Cont(\T)}\) and 
\(\DuG=\Bialg{\Contvin(\Z)}\). Let~\(\Hils=\ell^{2}(\Z)\) and let~\(\{e_{p}\}_{p\in\Z}\) be 
an orthonormal basis of \(\Hils\). A multiplicative unitary~\(\Multunit\in\U(\Hils\otimes\Hils)\) generating~\(\T\) is given by~\(\Multunit(e_{k}\otimes e_{l})\defeq e_{k}\otimes e_{l+k}\) for all~\(k,l\in\Z\). 

Since~\(\T\) and~\(\Z\) are Abelian groups, the quantum codouble of~\(\G\) coincides 
with \(\Z\times\T\), while viewed as \(\Cst\)\nb-quantum group. Similarly, the category of \(\G\)\nb-Yetter\nb-Drinfeld and the category of \(\DuG\)\nb-Yetter\nb-Drinfeld \(\Cst\)\nb-algebras are equivalent to the category of~\(\Z\times\T\)\nb-\(\Cst\)\nb-algebras 
and~\(\T\times\Z\)\nb-\(\Cst\)\nb-algebras, respectively. 

Fix~\(\Hils[L]=\Hils\otimes\Hils\) and the orthonormal basis~\(\{e_{i,j}\defeq e_{i}\otimes e_{j}\}_{i,j\in\Z}\). The canonical representations of~\(\Cont(\T)\cong \Cst(\Z)\) and~\(\Contvin(\Z)\cong\Cst(\T)\) on \(\Hils[L]\) through the the unitary~\(\udrinf\) and the self adjoint operator \(\Nhatdrinf\) with spectrum~\(\Z\) and commuting with~\(\udrinf\). 
Subsequently, the right and left representations~\(\Corep{U}\in\U(\Hils[L]\otimes\Hils)\) and \(\DuCorep{V}\defeq\Flip \Corep{V}^{*}\Flip\in\U(\Hils\otimes\Hils[L])\) and the resulting braiding operator~\(\Braiding{\Hils[L]}{\Hils[L]}\) are defined by
\begin{equation}
 \label{eq:Mar0619}
\Corep{U}=\Multunit_{23}, 
\quad \DuCorep{V}=\Multunit_{12},
\quad 
\Braiding{\Hils[L]}{\Hils[L]}=Z\Flip=\Multunit[*]_{23}\Flip.
\end{equation}

For a fixed~\(0<q<1\), let \(\C_{q}^{\times}\) be the subgroup~\(q^{\Z+i\R}\) 
of the multiplicative group~\(\C\setminus\{0\}\) and let~\(\conj{\C}_{q}=\C_{q}^{\times}\cup\{0\}\).  Define~\(\Upsilon=\Ph{\Upsilon}\Mod{\Upsilon}\) as a closed operator acting on~\(\Hils[L]\) by
\[
 \Ph{\Upsilon}e_{i,j}\defeq e_{i,j+1},
 \qquad
 \Mod{\Upsilon} e_{i,j}\defeq q^{2i+j}e_{i,j}, 
 \qquad 
 \Upsilon e_{i,j}\defeq q^{2i+j}e_{i,j+1}.
\]
 The operator~\(\Ph{\Upsilon}\) is unitary, \(\Mod{\Upsilon}\) is a strictly positive operator such that 
 \begin{equation}
  \label{eq:prop_of_Upsilon}
   \Ph{\Upsilon}\Mod{\Upsilon}\Ph{\Upsilon}^*=q^{-1}\Mod{\Upsilon},
   \qquad
   \textup{Sp}(\Mod{\Upsilon})= q^{\Z}\cup\{0\}.
\end{equation}
Thus~\(\Upsilon^{-1}e_{i,j}\defeq q^{-2i-j+1}e_{i,j-1}\) and the polar decomposition 
\(\Upsilon^{-1}=\Ph{\Upsilon^{-1}}\Mod{\Upsilon^{-1}}\) gives a unitary operator 
\(\Ph{\Upsilon^{-1}}\), a strictly positive operator \(\Mod{\Upsilon^{-1}}\) 
with spectrum~\(q^{\Z}\cup\{0\}\), and \(\Ph{\Upsilon^{-1}}\) and~\(\Mod{\Upsilon^{-1}}\) 
satisfy the following commutation relation
 \begin{equation}
  \label{eq:prop_of_Upsilon}
   \Ph{\Upsilon^{-1}}\Mod{\Upsilon^{-1}}\Ph{\Upsilon^{-1}}^*=q\Mod{\Upsilon^{-1}}.
\end{equation}
\begin{proposition}
 \label{prop:Cstar-q-plane}
 Define
\begin{equation}
  \label{eq:homogen_sp_E_2}
   B\defeq\left\{\sum_{k\in\mathbb{Z}}^{\text{finite}}\Ph{\Upsilon^{-1}}^{k}f_k(\Mod{\Upsilon^{-1}}) 
   \middle|\text{ } f_k \in\Contvin(\conj{\C}_{q}),\text{ }f_k(0)=0\text{ for } k\neq 0\right\}^{\CLS}.
\end{equation}
 Then \(B\) is a \(\Cst\)\nb-algebra, \(\Upsilon^{-1}\eta B\) and \(B\) is generated by~\(\Upsilon^{-1}\).
\end{proposition}
\begin{proof}
 For any two elements~\(\Ph{\Upsilon^{-1}}^{k}f_{k}(\Mod{\Upsilon^{-1}}), \Ph{\Upsilon^{-1}}^{l}g_{l}(\Mod{\Upsilon^{-1}})\in B\) we observe 
 that 
 \[
   \Ph{\Upsilon^{-1}}^{k}f_{k}(\Mod{\Upsilon^{-1}})\Ph{\Upsilon^{-1}}^{l}g_{l}(\Mod{\Upsilon^{-1}})
 =\Ph{\Upsilon}^{k-l}f_{k}(q^{l}\Mod{\Upsilon^{-1}})g_{l}(\Mod{\Upsilon^{-1}})\in B
 \]
and~\(B\) is \Star{}invariant; hence~\(B\) is a \(\Cst\)\nb-algebra. Rest of the proof 
follows using a similar line of argument used in~\cite{S2010}*{Proposition 4.1 (2-3)}.
\end{proof}
The maps~\(\gamma\colon \Upsilon^{-1}\to\Corep{U}(\Upsilon^{-1}\otimes 1)\Corep{U}^{*}=\Upsilon^{-1}\otimes u^{*}\eta B\otimes\Cont(\T)\) and \(\hat{\gamma}\colon\Upsilon^{-1}\to\Corep{V}(\Upsilon^{-1}\otimes 1)\Corep{V}^{*}=\Upsilon^{-1}\otimes q^{-2\hat{N}}\eta B\otimes\Contvin(\Z)\) define~\(\T\) and~\(\Z\) actions on~\(B\), respectively. Here~ \(u\) and~\(\hat{N}\) are the generators of~\(\Cont(\T)\) and~\(\Contvin(\Z)\) defined by~\(ue_{p}\defeq e_{p+1}\) and~\(\hat{N}e_{p}\defeq pe_{p}\), respectively. Thus~\(B\) is a~\(\Z\times\T\)\nb-\(\Cst\)\nb-algebra and using the braiding unitary~\(\Braiding{}{}\) in~\eqref{eq:Mar0619} we define~\(B\boxtimes B\). On the generator~\(\Upsilon^{-1}\) the canonical \(\Z\times\T\)\nb-equivariant 
embeddings~\(j_{1},j_{2}\in\Mor(B,B\boxtimes B)\) are defined by 
\begin{equation}
 \label{eq:Embed-BboxB}
j_{1}(\Upsilon^{-1})\defeq\Upsilon^{-1}\otimes 1, 
\qquad 
j_{2}(\Upsilon^{-1})\defeq Z(1\otimes\Upsilon^{-1})Z^{*}=q^{-2\Nhatdrinf}\otimes\Upsilon^{-1},
\end{equation}
where~\(\Nhatdrinf e_{i,j}\defeq j e_{i,j}\).
Now we recall the manageable braided multiplicative unitary over~\(\T\) relative to~\((\Corep{U},\Corep{V})\) constructed in~\cite{MRW2017}*{Theorem 4.1}:
\begin{equation}
 \label{eq:BMul-Qplane}
 \BrMultunit\defeq \brmultunit_{q}(\Upsilon q^{-2\Nhatdrinf}\otimes \Upsilon^{-1})\in\U(\Hils[L]\otimes\Hils[L]),
\end{equation}
where~\(\brmultunit_{q}\colon \conj{\C}_{(q)}\to \T\) is the quantum exponential function~\cite{W1992a}.
\begin{lemma}
 \label{lem:affl_twisted}
 The following identity holds
  \begin{equation}
    \label{eq:closure_of_R_and_S}
    \brmultunit_q(\Upsilon q^{-2\Nhatdrinf}\otimes\Upsilon^{-1})(\Upsilon^{-1}\otimes 1_{\Hils[L]})
    \brmultunit_q(\Upsilon q^{-2\Nhatdrinf}\otimes\Upsilon^{-1})^*
    =j_{1}(\Upsilon^{-1})\dotplus j_{2}(\Upsilon^{-1}) .
  \end{equation}

\end{lemma}
\begin{proof}
  Suppose, \(\widetilde{\Upsilon}\) is any closed operator acting on some Hilbert space 
  \(\Hils[L]'\) such that~\(\ker(\widetilde{\Upsilon})=\{0\}\), \(\textup{Sp}(\widetilde{\Upsilon})\subset \conj{\C}_{(q)}\) and 
  \(\Ph{\widetilde{\Upsilon}}\Mod{\widetilde{\Upsilon}}\Ph{\widetilde{\Upsilon}}^{*}
  =q^{-1}\Mod{\widetilde{\Upsilon}}\), where 
  \(\widetilde{\Upsilon}=\Ph{\widetilde{\Upsilon}}\Mod{\widetilde{\Upsilon}}\) is the 
  polar decomposition of~\(\widetilde{\Upsilon}\).  Define~\(r\defeq \widetilde{\Upsilon}\otimes\Upsilon^{-1}\otimes 1\) 
  and \(s\defeq \widetilde{\Upsilon}\otimes q^{-2\Nhatdrinf}\otimes\Upsilon^{-1}\). 
  A simple computation shows that the operators~\(r\) and~\(s\) are normal, \(\textup{Sp}(r), \textup{Sp}(s)\subseteq \conj{\C}_{(q)}\), 
  and satisfy the commutation relations in~\cite{W1992a}*{(0.1)}. By~\cite{W1992a}*{Theorem 2.2} we get
\begin{multline*}
\brmultunit_q(1\otimes \Upsilon q^{-2\Nhatdrinf}\otimes\Upsilon^{-1})
(\widetilde{\Upsilon}\otimes\Upsilon^{-1}\otimes 1) 
\brmultunit_q(1\otimes \Upsilon q^{-2\Nhatdrinf}\otimes\Upsilon^{-1})^* \\
  =\widetilde{\Upsilon}\otimes\Upsilon^{-1}\otimes 1\dotplus \widetilde{\Upsilon}\otimes q^{-2\Nhatdrinf}\otimes\Upsilon^{-1}.
\end{multline*}
Since~\(\widetilde{\Upsilon}\) is arbitrary, we have~\eqref{eq:closure_of_R_and_S}. 
\end{proof}
We shall prove that~\(\Bialg{B}\) with~\(\Comult[B](\Upsilon^{-1})\defeq j_{1}(\Upsilon^{-1})\dotplus j_{2}(\Upsilon^{-1})\) is the braided \(\Cst\)\nb-quantum group over~\(\T\) generated by~\(\BrMultunit\). For that purpose, we need to modify the techniques 
used by Woronowicz and Zakrzewski in~\cite{WZ2002}*{Theorem 4.1} and 
as the operator \(\Upsilon^{-1}\) is not normal. The following result is due to S. L. Woronowicz and it generalises~\cite{WZ2002}*{Proposition A.1}.
\begin{proposition}
 \label{Prop:tens_aff}
 Let~\(T_{i}\) be nonzero closed densely defined operator acting on~\(H_{i}\) and let 
 \(D_{i}\) be a nondegenerate \(\Cst\)\nb-subalgebra of~\(\Bound(\Hils_{i})\) for 
 \(i=1,2\). Then \((T_{1}\otimes T_{2})\eta (D_{1}\otimes D_{2})\) if 
 and only if~\(T_{1}\eta D_{1}\) and~\(T_{2}\eta D_{2}\).
\end{proposition}
\begin{proof}
 The proof of reverse implication follows from~\cite{WN1992}*{Theorem 6.1}. 
 For the other direction assume that~\((T_{1}\otimes T_{2})\eta (D_{1}\otimes D_{2})\). Then 
 \((T^{*}_{1}T_{1}\otimes T^{*}_{2}T_{2})\eta (D_{1}\otimes D_{2})\) and using 
 \cite{WZ2002}*{Proposition A.1} we obtain~\(T_{i}^{*}T_{i}\eta 
 D_{i}\) for \(i=1,2\). Therefore, \(T_{1}^{*}T_{1}\otimes 1\) and 
 \(1\otimes T_{2}^{*}T_{2}\) are affiliated to~\(D_{1}\otimes D_{2}\). Now 
 \(z_{T_{1}}\otimes z_{T_{2}}=z_{T_{1}\otimes T_{2}}f(T_{1}^{*}T_{1}\otimes 1, 1\otimes T_{2}^{*}T_{2})\) where 
 \(f\colon [0,+\infty)\times [0,+\infty)\to\R\) defined by \(f(x,y)=(1+xy)^{\frac{1}{2}}
 (1+x)^{-\frac{1}{2}}(1+y)^{-\frac{1}{2}}\). Therefore, 
 \(z_{T_{1}}\otimes z_{T_{2}}\in\Mult(D_{1}\otimes D_{2})\) and taking appropriate slices give 
 \(z_{T_{i}}\in\Mult(D_{i})\) for~\(i=1,2\). Then we know that~\((T_{1}^{*}T_{1}\otimes 1)\eta 
 (D_{1}\otimes D_{2})\) and~\(T_{1}^{*}T_{1}+1=(1-z_{T_{1}}^{*}z_{T_{1}})^{-1}\eta 
 D_{1}\). This shows that the domain of~\(T_{1}^{*}T_{1}\otimes 1\) coincides with the range of 
 \((1-z_{T_{1}}^{*}z_{T_{1}})\otimes 1\) and this implies
 \(((1-z_{T_{1}}^{*}z_{T_{1}})\otimes 1)(D_{1}\otimes D_{2})\) is dense in~\(D_{1}\otimes D_{2}\). 
 Hence, \((1-z_{T_{1}}^{*}z_{T_{1}})D_{1}\) is dense in~\(D_{1}\). 
 Similarly we can prove that~\(T_{2}\) is also affiliated to~\(D_{2}\). 
\end{proof}
In the next result, we construct the complex quantum plane as a braided \(\Cst\)\nb-quantum group~\(\Bialg{B}\) over~\(\T\).
\begin{theorem}
  \label{the:braided_qauntum_grp_qnt_plane}
   \(\Bialg{B}\) is a braided~\(\Cst\)\nb-quantum group over~\(\T\) generated by~\(\BrMultunit\). Equivalently, 
   \(B=\{(\omega\otimes\Id_{\Hils[L]})\BrMultunit \mid\text{ }\omega\in\Bound(\Hils[L])_{*}\}^{\CLS}\) and
   \(\Comult[B](\Upsilon^{-1})\defeq j_1(\Upsilon^{-1})\dotplus j_2(\Upsilon^{-1})\) is the unique 
   \(\Z\times\T\)\nb-equivariant element~\(\Comult[B]\in\Mor(B,B\boxtimes B)\) satisfying
  \eqref{eq:brd_chr_leg2}-\eqref{eq:br_cancel}.
  \end{theorem}
\begin{proof}
Let~\(B'\defeq\{(\omega\otimes\Id_{\Hils[L]})\brmultunit_{q}(\Upsilon q^{-2\Nhatdrinf}\otimes\Upsilon^{-1}) 
\mid\text{ }\omega\in\Bound(\Hils[L])_{*}\}^{\CLS}\). Then~\(B'\) is a 
\(\Cst\)\nb-algebra given by Theorem~\ref{the:braid_Qgrp}.
Since, \(\Upsilon q^{-2\Nhatdrinf}\) is a closed operator acting on~\(\Hils[L]\), it is 
 affiliated to~\(\Comp(\Hils[L])\). This implies that \(\Upsilon q^{-2\Nhatdrinf}\otimes\Upsilon^{-1}\) 
 is affiliated to~\(\Comp(\Hils[L])\otimes B\). Consequently, 
 \(\BrMultunit\in\U(\Comp(\Hils[L])\otimes B)\)
 because of \cite{W2001}*{Theorem 5.1}. Thus, from the 
 definition of \(B'\), we have \(B'\subseteq\Mult(B)\).
 
 Now~\(\BrMultunit(\Comp(\Hils[L])\otimes B)=\Comp(\Hils[L])\otimes B\) implies
\begin{align}
 \label{eq:slice_alg_qntpl}
 B'B
 &=\{(\omega\otimes\Id_{\Hils[L]})\BrMultunit (1\otimes b)\mid \text{ } 
 \omega\in\Bound(\Hils[L])_{*},\text{ }b\in B\}^\CLS \\
 &=\{(\omega\otimes\Id_{\Hils[L]})\BrMultunit (m\otimes b)\mid \text{ } 
 \omega\in\Bound(\Hils[L])_{*},\text{ }m\in\Comp(\Hils[L]),\text{ }b\in B\}^\CLS \nonumber\\
 &=\{(\omega\otimes\Id_{\Hils[L]})\BrMultunit \mid \text{ } 
 \omega\in\Bound(\Hils[L])_{*},\text{ }m\in\Comp(\Hils[L]),\text{ }b\in B\}^\CLS \nonumber
 =B.
\end{align}
To prove~\(B=B'\), it is sufficient to show that~\(B'B=B'\). We shall obtain this 
by showing the canonical embedding~\(B\hookrightarrow\Bound(\Hils[L])\) 
is an element of~\(\Mor(B,B')\). 

Define~\(T(\lambda)\defeq\brmultunit_{q}(\lambda\Upsilon q^{-2\Nhatdrinf}\otimes\Upsilon^{-1})\) 
and \(T'(\lambda)\defeq \brmultunit_{q}(\lambda\Upsilon q^{-2\Nhatdrinf}\otimes q^{-2\Nhatdrinf}\otimes \Upsilon^{-1})\)
for all~\(\lambda\in\conj{\C}_{(q)}\). By~\cite{W2001}*{Theorem 5.1}, 
\(\conj{\C}_{(q)}\ni\lambda\mapsto T(\lambda)\in\Mult(\Comp(\Hils[L])\otimes\Comp(\Hils[L]))\) and 
\(\lambda\to T'(\lambda)\in \Mult(\Comp(\Hils[L])\otimes\Comp(\Hils[L])\otimes\Comp(\Hils[L]))\) 
are continuous with respect to the strict 
topology. Therefore, \((T(\lambda)\otimes 1_{B'})_{\lambda\in\conj{\C}_{(q)}}\) is a 
continuous family of elements of~\(\Mult(\Comp(\Hils[L])\otimes\Comp(\Hils[L])\otimes B')\). 

For a fixed~\(\lambda\in  \conj{\C}_{(q)}\) we observe the operators 
\[
 R\defeq \lambda\Upsilon q^{-2\Nhatdrinf}\otimes\Upsilon^{-1} \otimes 1, 
 \qquad 
 S\defeq \lambda\Upsilon q^{-2\Nhatdrinf}\otimes q^{-2\Nhatdrinf}\otimes \Upsilon^{-1},
\]
are normal, \(\textup{Sp}(R), \textup{Sp}(S)\subseteq \conj{\C}_{(q)}\), and satisfy the commutation relations in~\cite{W1992a}*{(0.1)}. By 
\cite{W1992a}*{Theorems 2.2 \& 3.1} we get
\[
    \brmultunit_{q}(R^{-1}S)\brmultunit_{q}(R)\brmultunit_{q}(R^{-1}S)^{*}
  =\brmultunit_{q}(\brmultunit_{q}(R^{-1}S)R\brmultunit_{q}(R^{-1}S)^{*})
  =\brmultunit_{q}(R)\brmultunit_{q}(S)
\]
and this is equivalent to 
\[
 T(\lambda)_{12}^{*}\BrMultunit_{23}T(\lambda)_{12}\BrMultunit_{23}^{*} 
 =\brmultunit_{q}(\lambda\Upsilon q^{-2\Nhatdrinf}\otimes q^{-2\Nhatdrinf}\otimes \Upsilon^{-1}).
\]

Now \(\BrMultunit\in\Mult(\Comp(\Hils[L])\otimes B')\) and~\(T(\lambda)\in\Mult(\Comp(\Hils[L])\otimes\Comp(\Hils[L]))\) 
implies \(T'(\lambda)\in\Mult(\Comp(\Hils[L])\otimes\Comp(\Hils[L])\otimes B')\) for all~\(\lambda\in  \conj{\C}_{(q)}\). This shows that 
\(\lambda\mapsto T'(\lambda)\in \Mult(\Comp(\Hils[L])\otimes\Comp(\Hils[L])\otimes B')\) is continuous with respect to the 
strict topology. Therefore, \(\Upsilon q^{-2\Nhatdrinf}\otimes q^{-2\Nhatdrinf}\otimes \Upsilon^{-1}\) is affiliated to 
\(\Comp(\Hils[L])\otimes\Comp(\Hils[L])\otimes B'\); hence, 
\(\Upsilon^{-1}\) is affiliated to~\(B'\) by Proposition~\ref{Prop:tens_aff}. 
Since~\(\Upsilon^{-1}\) generates \(B\) and is affiliated to~\(B'\) the embedding 
\(B\hookrightarrow\Bound(\Hils[L])\) is an element of \(\Mor(B,B')\), 
see~\cite{W1995}*{Definition 3.1}.  Consequently, Lemma~\ref{lem:affl_twisted} shows~\(\Comult[B](\Upsilon^{-1})=j_1(\Upsilon^{-1})\dotplus j_2(\Upsilon^{-1})\). Finally, since \(\Upsilon^{-1}\eta B\) and \(\Comult[B]\in\Mor(B,B\boxtimes B)\) so is \(\Comult[B](\Upsilon^{-1}) \eta B\boxtimes B\). 
  \end{proof}
\subsubsection{Dual of the complex quantum plane}
Suppose~\(\DuBialg{B}\) is the dual braided \(\Cst\)\nb-quantum group of~\(\Bialg{B}\) over~\(\Z\) generated by the dual of~\(\BrMultunit\) given by~\cite{MRW2017}*{Definition 3.3 \& Proposition 3.4}. Since the roles of~\(\G\) and~\(\DuG\) are exchanged it yield the changes in the braiding operator~\(\Dualbraiding{\Hils[L]}{\Hils[L]}\defeq (\Braiding{\Hils[L]}{\Hils[L]})^{*}=\widehat{Z}\Flip\) with~\(\widehat{Z}\defeq \Flip Z^{*}\Flip\) and~\(\hat{B}\) is an object  \(\YDcat(\DuG)\). A variant of the Proposition~\ref{prop:Cstar-q-plane} shows that~\(\hat{B}\) is generated by~\(\Upsilon\). Let \(\widehat{\boxtimes}\) be the monoidal product of~\(\YDcat(\DuG)\) and \(i_{1}, i_{2}\) be the canonical morphisms \(\hat{B}\to \hat{B}\widehat{\boxtimes}\hat{B}\) defined by~\eqref{eq:twisted_tensor} (with respect to the braiding 
\(\Dualbraiding{\Hils[L]}{\Hils[L]}\)). On~\(\Upsilon\)  they are defined by 
\begin{equation}
  \label{eq:Embed-hatBboxB}
i_{1}(\Upsilon)\defeq\Upsilon\otimes 1, 
\qquad 
i_{2}(\Upsilon)\defeq \widehat{Z}(1\otimes\Upsilon)\widehat{Z}^{*}=\udrinf\otimes \Upsilon.
\end{equation}
A similar analysis describes the dual~\(\DuBialg{B}\) of~\(\Bialg{B}\) as a braided quantum group over~\(\Z\).
\begin{corollary}
   \(\hat{B}=\{(\omega\otimes\Id_{\Hils[L]})\DuBrMultunit\mid \omega\in\Bound(\Hils[L])_{*}\}^\CLS\) and 
   is a \(\DuG\)\nb-Yetter-Drinfeld \(\Cst\)\nb-algebra with respect to the~\(\DuG\) and~\(\G\) actions 
   \(\hat{\delta}\) and~\(\delta\) defined by~\(\Upsilon\to \Upsilon\otimes u\) and~\(\Upsilon\to \Upsilon\otimes q^{2\hat{N}}\), respectively. 
   The sum \(i_{1}(\Upsilon)\dotplus i_2(\Upsilon)\) is affiliated 
  to~\(\hat{B}\widehat{\boxtimes} \hat{B}\). The map \(\DuComult[B](\Upsilon)\defeq i_1(\Upsilon)\dotplus i_2(\Upsilon)\) 
  is the unique \(\T\times\Z\)\nb-equivariant element~\(\DuComult[B]\in\Mor(\hat{B},\hat{B}\widehat{\boxtimes}\hat{B})\) satisfying
  \eqref{eq:brd_chr_leg2}-\eqref{eq:br_cancel} for the dual of~\(\BrMultunit\). 
  Thus, \(\DuBialg{B}\) is a braided~\(\Cst\)\nb-quantum group over~\(\Z\).
\end{corollary}
We may also realise~\(\hat{B}\) as a~\(\T\)\nb-Yetter\nb-Drinfeld \(\Cst\)\nb-algebra. 
On the other hand, a simple observation shows that the polar decomposition \(\Upsilon^{*}=\Ph{\Upsilon^{*}}\Mod{\Upsilon^{*}}\) gives a unitary operator~\(\Ph{\Upsilon^*}\), a strictly positive operator~\(\Mod{\Upsilon^*}\) with spectrum~\(q^{\Z}\cup \{0\}\) and satisfy the commutation relation~\eqref{eq:prop_of_Upsilon}. Since, \(\hat{B}\) is also generated by~\(\Upsilon^{*}\), the map~\(f\colon \Upsilon^{-1}\to\Upsilon^{*}\) extends to an isomorphism between \(B\) and~\(\hat{B}\) in the category of~\(\Z\times\T\)\nb-\(\Cst\)\nb-algebras. Thus \(B\) is isomorphic to~\(\hat{B}\) also in the category of~\(\T\times\Z\)\nb-\(\Cst\)\nb-algebras. Consequently, \(f\) an isomorphism of braided \(\Cst\)\nb-quantum groups between \((B,\DuComult[B])\) and \(\Bialg{B}\) over~\(\Z\).

\subsubsection{The bosonization}
Now we describe the quantum group with projection~\(\Bialg{C}\) in Proposition~\ref{prop:big_qntgrp} 
associated to the quantum plane \(\Bialg{B}\). Here~\(\G\) is the compact group~\(\T\) 
viewed as a quantum group then~\(C=\Cont(\T)\boxtimes B\). In fact~\(C\cong B\rtimes_{\hat{\gamma}}\Z\), 
where~\(\hat{\gamma}\) is defined by~\(\hat{\gamma}_{m}(\Upsilon^{-1})=q^{-2m}\Upsilon^{-1}\).
The embeddings of~\(\Cont(\T)\) and~\(B\) are given by~\(u\mapsto u\otimes 1\) and~\(\Upsilon^{-1}\mapsto 
\Ducorep{V}^{*}(1\otimes\Upsilon^{-1})\Ducorep{V}=q^{-2\hat{N}}\otimes\Upsilon^{-1}\). Using the 
definitions of the unitaries~\(\Corep{U}\), \(\Multunit\), \(\DuCorep{V}\) and \(\BrMultunit\) we compute that 
\((\Multunit[C])(u\otimes 1\otimes 1\otimes 1)(\Multunit[C])^{*}=u\otimes 1\otimes u\otimes 1\) and 
\begin{align*}
 & \Multunit[C](q^{-2\hat{N}}\otimes \Upsilon^{-1}\otimes 1\otimes 1)(\Multunit[C])^{*} \\
 &= \Multunit_{13}\Corep{U}_{23}\DuCorep{V}_{34}^{*}\BrMultunit_{24} 
       (q^{-2\hat{N}}\otimes \Upsilon^{-1}\otimes 1\otimes 1)
       \BrMultunit_{24}\DuCorep{V}_{34}\Corep{U}_{23}^{*}\Multunit[*]_{13}\\
 &= \Multunit_{13}\Corep{U}_{23}\DuCorep{V}_{34}^{*}
       \bigl(q^{-2\hat{N}}\otimes (\Upsilon^{-1}\otimes 1\otimes 1\dotplus q^{-2\Nhatdrinf}\otimes 1\otimes \Upsilon^{-1})\bigr)
       \DuCorep{V}_{34}\Corep{U}_{23}^{*}\Multunit[*]_{13}\\
 &= \Multunit_{13}\Corep{U}_{23}
       \bigl(q^{-2\hat{N}}\otimes (\Upsilon^{-1}\otimes 1\otimes 1\dotplus q^{-2\Nhatdrinf}\otimes q^{-2\hat{N}}\otimes \Upsilon^{-1})\bigr)
       \Corep{U}_{23}^{*}\Multunit[*]_{13}\\       
 &= \Multunit_{13}
       \bigl(q^{-2\hat{N}}\otimes (\Upsilon^{-1}\otimes u^{*}\otimes 1\dotplus 1\otimes q^{-2\hat{N}}\otimes \Upsilon^{-1})\bigr)
       \Multunit[*]_{13}\\       
  &=  q^{-2\hat{N}}\otimes \Upsilon^{-1}\otimes u^{*}\otimes 1 \dotplus 1\otimes 1\otimes q^{-2\hat{N}}\otimes \Upsilon^{-1}.
\end{align*}
Define~\(\Psi\defeq q^{-2\hat{N}}\otimes \Upsilon^{-1}\) and~\(V\defeq u^{*}\otimes 1\). Then 
\(C\) is the universal \(\Cst\)\nb-algebra generated by~\(\Psi\) and~\(V\) satisfying the following (formal) 
relations
\begin{equation}
 \label{eq:bosonized}
  V^{*}V=VV^{*}=1, 
  \qquad 
  \Psi^{*}\Psi=q^{-2}\Psi\Psi^{*}, 
  \qquad 
  \textup{Sp}(\Mod{\Psi})=q^{\Z}\cup\{0\}, 
  \qquad 
  V\Psi V^{*}= q^{-2}\Psi, 
\end{equation}
and the comultiplication map~\(\Comult[C]\in\Mor(C, C\otimes C)\) is given by 
\begin{equation*}
 \Comult[C](V)=V\otimes V, 
 \qquad 
 \Comult[C](\Psi)=\Psi\otimes V\dotplus 1\otimes \Psi.
\end{equation*}
In fact,~\(\Bialg{C}\) are closely related to \(\textup{E}_{q}(2)\) groups~\cites{W1991b}. For 
a fixed \(0<q<1\) the quantum \(\textup{E}(2)\) group~\(\Bialg{C'}\) is described by a unitary operator \(v\) and a normal 
operator~\(n\) with \(\textup{Sp}(\Mod{n})=q^{\Z}\cup\{0\}\). Underlying \(\Cst\)\nb-algebra \(C'\) is generated by~\(v\) and~\(n\) 
subject to the commutation relation~\(v^{*}nv=qn\) and~\(\Comult[C']\in\Mor(C',C'\otimes C')\) is defined by \(\Comult[C'](v)=v\otimes v\) 
and~\(\Comult[C'](n)=v\otimes n\dotplus n\otimes v^{*}\). A simple observations show that~\(V=v^{2}\) and~\(\Psi =v^{*}n\) satisfy~\eqref{eq:bosonized} 
and~~\(\Comult[C]|_{C'}=\Comult[C]\). Therefore, there exists a unique Hopf~\Star{}homomorphism~\(f\colon C\to C'\) such that~\(f(V)=v^{2}\) 
and~\(f(\Psi)=v^{*}n\). The image of~\(\Bialg{C}\) inside~\(\Bialg{C'}\) was constructed by Woronowicz (in an unpublished work) under the name simplified 
quantum \(E(2)\) groups.  
\begin{bibdiv}
  \begin{biblist}
\bib{BS1993}{article}{
  author={Baaj, Saad},
  author={Skandalis, Georges},
  title={Unitaires multiplicatifs et dualit\'{e} pour les produits crois\'{e}s de {$C^*$}-alg{\`e}bres},
  date={1993},
  issn={0012-9593},
  journal={Ann. Sci. \'{E}cole Norm. Sup. (4)},
  volume={26},
  number={4},
  pages={425\ndash 488},
  eprint={http://www.numdam.org/item?id=ASENS_1993_4_26_4_425_0},
}

\bib{BJR2022}{article}{
      author={Bhattacharjee, Suvrajit},
      author={Joardar, Soumalya},
      author={Roy, Sutanu},
       title={Braided quantum symmetries of graph $\mathrm{C}^*$-algebras},
       publisher={arXiv},
       date={2022},
       doi={10.48550/arXiv.2201.09885},
}

\bib{DKSS2012}{article}{
  author={Daws, Matthew},
  author={Kasprzak, Pawe\l },
  author={Skalski, Adam},
  author={So\l tan, Piotr~M.},
  title={Closed quantum subgroups of locally compact quantum groups},
  date={2012},
  issn={0001-8708},
  journal={Adv. Math.},
  volume={231},
  number={6},
  pages={3473\ndash 3501},
  doi={10.1016/j.aim.2012.09.002},
}

\bib{KMRW2016}{article}{
  author={Kasprzak, Pawe\l },
  author={Meyer, Ralf},
  author={Roy, Sutanu},
  author={Woronowicz, Stanis\l aw~L.},
  title={Braided quantum {$\rm SU(2)$} groups},
  date={2016},
  issn={1661-6952},
  journal={J. Noncommut. Geom.},
  volume={10},
  number={4},
  pages={1611\ndash 1625},
  doi={10.4171/JNCG/268},
}

\bib{KS2015}{article}{
  author={Kasprzak, Pawe\l },
  author={So\l tan, Piotr~M.},
  title={Quantum groups with projection on von {N}eumann algebra level},
  date={2015},
  issn={0022-247X},
  journal={J. Math. Anal. Appl.},
  volume={427},
  number={1},
  pages={289\ndash 306},
  doi={10.1016/j.jmaa.2015.02.047},
}

\bib{KV2000}{article}{
  author={Kustermans, Johan},
  author={Vaes, Stefaan},
  title={Locally compact quantum groups},
  date={2000},
  issn={0012-9593},
  journal={Ann. Sci. \'{E}cole Norm. Sup. (4)},
  volume={33},
  number={6},
  pages={837\ndash 934},
  doi={10.1016/S0012-9593(00)01055-7},
}

\bib{M1994}{article}{
  author={Majid, Shahn},
  title={Cross products by braided groups and bosonization},
  date={1994},
  issn={0021-8693},
  journal={J. Algebra},
  volume={163},
  number={1},
  pages={165\ndash 190},
  doi={10.1006/jabr.1994.1011},
}

\bib{M1995}{book}{
  author={Majid, Shahn},
  title={Foundations of quantum group theory},
  publisher={Cambridge University Press, Cambridge},
  date={1995},
  isbn={0-521-46032-8},
  doi={10.1017/CBO9780511613104},
}

\bib{M1999}{article}{
   author={Majid, S.},
   title={Double-bosonization of braided groups and the construction of $U_q(\mathfrak{g})$},
   date={1999},   
   issn={0305-0041},   
   journal={Math. Proc. Cambridge Philos. Soc.},
   volume={125},
   number={1},
   pages={151--192},
   doi={10.1017/S0305004198002576},
}

\bib{M2000}{article}{
   author={Majid, Shahn},
   title={Braided-Lie bialgebras},
   date={2000},   
   issn={0030-8730},
   journal={Pacific J. Math.},
   volume={192},
   number={2},
   pages={329--356},
   doi={10.2140/pjm.2000.192.329},
}

\bib{MR2019a}{article}{
  author={Meyer, Ralf},
  author={Roy, Sutanu},
  title={Braided free orthogonal quantum groups},
  date={2022},
  issn={1073-7928},
  journal={Int. Math. Res. Not. IMRN},  
  number={12},
  pages={8890--8915},
   doi={10.1093/imrn/rnaa379},
   }

\bib{MR2019}{proceedings}{
  author={Meyer, Ralf},
  author={Roy, Sutanu},
  title={Braided multiplicative unitaries as regular objects},
  publisher={Mathematical Society of Japan},
  date={2019},
  volume={80},
  isbn={9784864970792},
  doi={10.2969/aspm/08010153},
}

\bib{MRW2012}{article}{
  author={Meyer, Ralf},
  author={Roy, Sutanu},
  author={Woronowicz, Stanis\l aw~L.},
  title={Homomorphisms of quantum groups},
  date={2012},
  issn={1867-5778},
  journal={M\"{u}nster J. Math.},
  volume={5},
  pages={1\ndash 24},
  eprint={http://nbn-resolving.de/urn:nbn:de:hbz:6-88399662599},
}

\bib{MRW2014}{article}{
  author={Meyer, Ralf},
  author={Roy, Sutanu},
  author={Woronowicz, Stanis\l aw~L.},
  title={Quantum group-twisted tensor products of {C{$^*$}}-algebras},
  date={2014},
  issn={0129-167X},
  journal={Internat. J. Math.},
  volume={25},
  number={2},
  pages={1450019, 37},
  doi={10.1142/S0129167X14500190},
}

\bib{MRW2016}{article}{
  author={Meyer, Ralf},
  author={Roy, Sutanu},
  author={Woronowicz, Stanis\l aw~L.},
  title={Quantum group-twisted tensor products of {${\rm C}^*$}-algebras. {II}},
  date={2016},
  issn={1661-6952},
  journal={J. Noncommut. Geom.},
  volume={10},
  number={3},
  pages={859\ndash 888},
  doi={10.4171/JNCG/250},
}

\bib{MRW2017}{article}{
  author={Meyer, Ralf},
  author={Roy, Sutanu},
  author={Woronowicz, Stanis\l aw~L.},
  title={Semidirect products of {$\rm C^*$}-quantum groups: multiplicative unitaries approach},
  date={2017},
  issn={0010-3616},
  journal={Comm. Math. Phys.},
  volume={351},
  number={1},
  pages={249\ndash 282},
  doi={10.1007/s00220-016-2727-3},
}

\bib{NV2010}{article}{
  author={Nest, Ryszard},
  author={Voigt, Christian},
  title={Equivariant {P}oincar\'{e} duality for quantum group actions},
  date={2010},
  issn={0022-1236},
  journal={J. Funct. Anal.},
  volume={258},
  number={5},
  pages={1466\ndash 1503},
  doi={10.1016/j.jfa.2009.10.015},
}

\bib{PV1997}{article}{
   author={Podle\'{s}, P.},
   author={Woronowicz, Stanis\l aw L.},
   title={On the structure of inhomogeneous quantum groups},
   journal={Comm. Math. Phys.},
   volume={185},
   date={1997},
   number={2},
   pages={325\ndash 358},
   doi={10.1007/s002200050093},
}

\bib{R1985}{article}{
  author={Radford, David~E.},
  title={The structure of {H}opf algebras with a projection},
  date={1985},
  issn={0021-8693},
  journal={J. Algebra},
  volume={92},
  number={2},
  pages={322\ndash 347},
  doi={10.1016/0021-8693(85)90124-3},
}

\bib{RR2020}{article}{
  author={Rahaman, Atibur},
  author={Roy, Sutanu},
  title={Quantum $E(2)$ groups for complex deformation parameters},
  journal={Rev. Math. Phys.},
  volume={33},
  date={2021},
  number={6},
  pages={Paper No. 2150021, 28},
  issn={0129-055X},
  doi={10.1142/S0129055X21500215},
}
  
\bib{R2013}{thesis}{
  author={Roy, Sutanu},
  title={{$\rm C^*$}-quantum groups with projection},
  type={phdthesis},
  institution={Georg-August Universit\"at G\"ottingen},
  date={2013},
  eprint={http://hdl.handle.net/11858/00-1735-0000-0022-5EF9-0},
}

\bib{R2015a}{article}{
  author={Roy, Sutanu},
  title={The {D}rinfeld double for {$C^*$}-algebraic quantum groups},
  date={2015},
  issn={0379-4024},
  journal={J. Operator Theory},
  volume={74},
  number={2},
  pages={485\ndash 515},
  doi={10.7900/jot.2014sep04.2053},
}
\bib{R2021}{article}{
      author={Roy, Sutanu},
      title={Homogeneous quantum symmetries of finite spaces over the circle group},
      date={2021},     
      publisher={arXiv},       
      doi={10.48550/arXiv.2105.01556},
}

\bib{RW2018}{article}{
  author={Roy, Sutanu},
  author={Woronowicz, Stanis\l aw~L.},
  title={Landstad-{V}aes theory for locally compact quantum groups},
  date={2018},
  issn={0129-167X},
  journal={Internat. J. Math.},
  volume={29},
  number={4},
  pages={1850028, 36},
  doi={10.1142/S0129167X18500283},
}

\bib{S2005}{article}{
      author={So\l tan, Piotr M.},
       title={New quantum ``{$az+b$}'' groups},
        date={2005},
        ISSN={0129-055X},
     journal={Rev. Math. Phys.},
      volume={17},
      number={3},
       pages={313\ndash 364},
         doi={10.1142/S0129055X05002339},
}

\bib{SW2007}{article}{
  author={So\l tan, Piotr~M.},
  author={Woronowicz, Stanis\l aw~L.},
  title={From multiplicative unitaries to quantum groups. {II}},
  date={2007},
  issn={0022-1236},
  journal={J. Funct. Anal.},
  volume={252},
  number={1},
  pages={42\ndash 67},
  doi={10.1016/j.jfa.2007.07.006},
}

\bib{S2010}{article}{
  author={So\l tan, Piotr M.},
  title={Examples of non-compact quantum group actions},
  date={2010},
  issn={0022-247X},
  journal={J. Math. Anal. Appl.},
  volume={372},
  number={1},
  pages={224\ndash 236},
  doi={10.1016/j.jmaa.2010.06.045},
}

\bib{V2005}{article}{
  author={Vaes, Stefaan},
  title={A new approach to induction and imprimitivity results},
  date={2005},
  issn={0022-1236},
  journal={J. Funct. Anal.},
  volume={229},
  number={2},
  pages={317\ndash 374},
  doi={10.1016/j.jfa.2004.11.016},
}

\bib{W1991a}{article}{
  author={Woronowicz, Stanis\l aw~L.},
  title={Quantum {$E(2)$} group and its {P}ontryagin dual},
  date={1991},
  issn={0377-9017},
  journal={Lett. Math. Phys.},
  volume={23},
  number={4},
  pages={251\ndash 263},
  doi={10.1007/BF00398822},
}

\bib{W1991b}{article}{
  author={Woronowicz, Stanis\l aw~L.},
  title={Unbounded elements affiliated with {$C^*$}-algebras and noncompact quantum groups},
  date={1991},
  issn={0010-3616},
  journal={Comm. Math. Phys.},
  volume={136},
  number={2},
  pages={399\ndash 432},
}

\bib{W1992a}{article}{
  author={Woronowicz, Stanis\l aw~L.},
  title={Operator equalities related to the quantum {$E(2)$} group},
  date={1992},
  issn={0010-3616},
  journal={Comm. Math. Phys.},
  volume={144},
  number={2},
  pages={417\ndash 428},
}

\bib{W1995}{article}{
  author={Woronowicz, Stanis\l aw~L.},
  title={{$C^*$}-algebras generated by unbounded elements},
  date={1995},
  issn={0129-055X},
  journal={Rev. Math. Phys.},
  volume={7},
  number={3},
  pages={481\ndash 521},
  doi={10.1142/S0129055X95000207},
}

\bib{W1996}{article}{
  author={Woronowicz, Stanis\l aw~L.},
  title={From multiplicative unitaries to quantum groups},
  date={1996},
  issn={0129-167X},
  journal={Internat. J. Math.},
  volume={7},
  number={1},
  pages={127\ndash 149},
  doi={10.1142/S0129167X96000086},
}

\bib{W2001}{article}{
  author={Woronowicz, Stanis\l aw~L.},
  title={Quantum ``{$az+b$}'' group on complex plane},
  date={2001},
  issn={0129-167X},
  journal={Internat. J. Math.},
  volume={12},
  number={4},
  pages={461\ndash 503},
  doi={10.1142/S0129167X01000836},
}

\bib{WN1992}{article}{
  author={Woronowicz, Stanis\l aw~L.},
  author={Napi\'{o}rkowski, K.},
  title={Operator theory in the {$C^\ast $}-algebra framework},
  date={1992},
  issn={0034-4877},
  journal={Rep. Math. Phys.},
  volume={31},
  number={3},
  pages={353\ndash 371},
  doi={10.1016/0034-4877(92)90025-V},
}

\bib{WZ2002}{article}{
  author={Woronowicz, Stanis\l aw~L.},
  author={Zakrzewski, Stanis\l aw},
  title={Quantum `{$ax+b$}' group},
  date={2002},
  issn={0129-055X},
  journal={Rev. Math. Phys.},
  volume={14},
  number={7-8},
  pages={797\ndash 828},
  doi={10.1142/S0129055X02001405},
}
  \end{biblist}
\end{bibdiv}
\end{document}